\theoremstyle{plain}
\newtheorem{theorem}{Theorem}[section] 
\theoremstyle{definition}
\theoremstyle{property}
\theoremstyle{remark}
\newaliascnt{proposition}{theorem}
\newaliascnt{lemma}{theorem}
\newaliascnt{corollary}{theorem}
\newaliascnt{observation}{theorem}
\newaliascnt{definition}{theorem}
\newaliascnt{fact}{theorem}
\newaliascnt{remark}{theorem}
\newaliascnt{example}{theorem}
\newaliascnt{property}{theorem}
\theoremstyle{plain}
\newtheorem{proposition}[proposition]{Proposition} 
\newtheorem{lemma}[lemma]{Lemma} 
\newtheorem{corollary}[corollary]{Corollary} 
\newtheorem{observation}[observation]{Observation}
\newtheorem{fact}[fact]{Fact} 
\theoremstyle{definition}
\newtheorem{definition}[definition]{Definition} 
\newtheorem{remark}[remark]{Remark}
\newtheorem*{definition*}{Definition}
\newtheorem*{remark*}{Remark}
\newtheorem*{example*}{Example}
\theoremstyle{property}
\theoremstyle{remark}
\theoremstyle{plain}
\newtheorem*{theorem.1}{Theorem \ref{thm:E-0-def.of.val.ring}}
\title{An existential $\emptyset$-definition of $\mathbb{F}_{q}[[t]]$ in $\mathbb{F}_{q}((t))$}
\author{Will Anscombe and Jochen Koenigsmann}
\date{}
\begin{document}
\maketitle
\let\thefootnote\relax\footnotetext{Most of this research forms part of the first author's doctoral thesis, which was supported by EPSRC and completed under the supervision of the second author.}
\begin{abstract}
We show that the valuation ring $\mathbb{F}_{q}[[t]]$ in the local field $\mathbb{F}_{q}((t))$ is existentially definable in the language of rings with no parameters. The method is to use the definition of the henselian topology following the work of Prestel-Ziegler to give an $\exists$-$\mathbb{F}_{q}$-definable bounded neighbouhood of $0$. Then we `tweak' this set by subtracting, taking roots, and applying Hensel's Lemma in order to find an $\exists$-$\mathbb{F}_{q}$-definable subset of $\mathbb{F}_{q}[[t]]$ which contains $t\mathbb{F}_{q}[[t]]$. Finally, we use the fact that $\mathbb{F}_{q}$ is defined by the formula $x^{q}-x\;\dot{=}\;0$ to extend the definition to the whole of $\mathbb{F}_{q}[[t]]$ and to rid the definition of parameters.

Several extensions of the theorem are obtained, notably an $\exists$-$\emptyset$-definition of the valuation ring of a non-trivial valuation with divisible value group.
\end{abstract}

\section{Introduction}
This paper deals with questions of definability in power series fields. Unless stated otherwise, all definitions will be in the language $\mathcal{L}_{\mathrm{ring}}$ of rings. Let $q=p^{k}$ be a power of a prime and let $\mathbb{F}_{q}((t))$ be the field of formal power series over the finite field $\mathbb{F}_{q}$; sometimes this is called the field of Laurent series over $\mathbb{F}_{q}$. The ring $\mathbb{F}_{q}[[t]]$ of formal power series is the valuation ring of the $t$-adic valuation on $\mathbb{F}_{q}((t))$. 

In \autoref{section:E-0-def} of this paper we prove the following theorem.
\begin{theorem}\label{thm:E-0-def.of.val.ring}
$\mathbb{F}_{q}[[t]]$ is existentially definable in $\mathbb{F}_{q}((t))$ using no parameters.
\end{theorem}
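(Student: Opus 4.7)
The plan follows the abstract's outline in four steps. First, using Prestel-Ziegler's existential characterisation of the henselian topology, I would produce an $\exists$-$\mathbb{F}_q$-formula whose solution set $U \subseteq \mathbb{F}_q((t))$ is a bounded neighbourhood of $0$: that is, $t^m \mathbb{F}_q[[t]] \subseteq U \subseteq t^{-k} \mathbb{F}_q[[t]]$ for some $m, k$. The guiding idea is that a formula of the shape $\exists y.\,f(x,y)=0$ defines an open set about $0$ whenever Hensel's lemma can lift a simple root in $y$ uniformly for $x$ in a small ball; a natural candidate in characteristic $p$ is an Artin-Schreier formula $\exists y.\,y^p - y = g(x)$, or an $n$-th root formula $\exists y.\,y^n = f(x)$ with $\gcd(n,p)=1$, where $f, g$ have their parameters in $\mathbb{F}_q$. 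Second, I would reshape $U$ into an $\exists$-$\mathbb{F}_q$-definable set $T$ with $t\mathbb{F}_q[[t]] \subseteq T \subseteq \mathbb{F}_q[[t]]$ by the operations listed in the abstract (subtracting $\mathbb{F}_q$-constants, extracting roots existentially, and invoking Hensel's lemma to guarantee that those roots exist), aiming to push the upper boundary of $U$ down to the boundary of $\mathbb{F}_q[[t]]$ while keeping $t\mathbb{F}_q[[t]]$ inside.

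Third, because $\mathbb{F}_q[[t]] = \mathbb{F}_q + t\mathbb{F}_q[[t]]$ and $\mathbb{F}_q$ is quantifier-free $\emptyset$-defined by $c^q - c = 0$, the extension to the whole valuation ring is immediate:
\[
\mathbb{F}_q[[t]] \;=\; \{a : \exists a_0\,(a_0^q = a_0 \wedge a - a_0 \in T)\}.
\]
Fourth, the $\mathbb{F}_q$-parameters used to define $T$ are replaced by existentially quantified variables $u$ subject to $u^q = u$, giving an $\exists$-$\emptyset$-formula; here one has to verify that for \emph{every} admissible $u$ the resulting tweaked set remains inside $\mathbb{F}_q[[t]]$, so that the union over $u$ does not overshoot.

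I expect the main obstacle to lie in the tweaking step. Because $\mathbb{F}_q((t))$ is non-perfect in characteristic $p$, $p$-th roots are not freely available and cannot be produced by the naive Hensel argument; the construction must instead combine Artin-Schreier conditions with $n$-th root formulas for $n$ coprime to $p$, chosen so that their conjunction pins $T$ \emph{exactly} between $t\mathbb{F}_q[[t]]$ and $\mathbb{F}_q[[t]]$ rather than overshooting in either direction. Engineering this uniformly in the $\mathbb{F}_q$-parameters — so that the formula retains a subset of $\mathbb{F}_q[[t]]$ no matter which $\mathbb{F}_q$-element is substituted — is what makes the final parameter-removal step go through without enlarging the defined set.
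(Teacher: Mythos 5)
Your outline follows the paper's own strategy (it is essentially the abstract), but the two steps that carry all the mathematical content are left as intentions, and the concrete guesses you do make would fail. For the bounded neighbourhood of $0$: the candidate formulas you propose, $\exists y\,(y^{p}-y=g(x))$ and $\exists y\,(y^{n}=f(x))$, define neighbourhoods of $0$ (by Hensel) but \emph{unbounded} ones. For instance $\{x:\exists y\;y^{p}-y=x\}$ contains $t^{-pn}-t^{-n}=\wp(t^{-n})$ for every $n$, and $\{x:\exists y\;y^{l}=1+x\}$ contains $t^{-ln}-1$ for every $n$; boundedness is exactly the property these shapes cannot deliver. The paper instead takes $U_{f,a}=f(K)^{-1}-f(a)^{-1}$ for $f$ separable, irreducible and non-linear with $Df(a)\neq 0$ (Prestel's lemma): since $f$ has no zero in $K$, the set $f(K)^{-1}$ is bounded, while Hensel's lemma at the simple point $a$ makes it a neighbourhood of $f(a)^{-1}$. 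Without some such device your step 1 does not produce a bounded set, and everything downstream collapses.

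The ``tweaking'' step, which you correctly identify as the main obstacle, is also not the combination of Artin--Schreier and radical conditions you sketch, and ``subtracting $\mathbb{F}_q$-constants'' is not one of the operations. What the paper does is: (i) pass to the preimage under $x\mapsto x^{m}$ for large $m$, which forces the set of uniformisers $S(1)$ into the set while preserving boundedness (because $|mv(x)|\geq|v(x)|$ for $v(x)<0$); (ii) subtract the set \emph{from itself}, using $B(n;0)\subseteq S(n)-S(n)$ to convert ``contains the sphere $S(1)$'' into ``contains the ball $\mathcal{M}$''; (iii) repeat with $x\mapsto x^{h}$ to push the upper bound down to $\mathcal{O}$, and subtract from itself once more. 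Finally, for parameter removal you quantify the element $a$ over $\{y:y^{q}=y,\ Df(y)\neq 0\}$ (the union of finitely many bounded neighbourhoods of $0$ is again one, so there is no ``overshooting'' to worry about at that stage), but you still must arrange that $f$ itself has coefficients in $\mathbb{F}_{p}$ while remaining irreducible over $\mathbb{F}_{q}$ with $Df$ not vanishing on all of $\mathbb{F}_{q}$; the paper secures this by taking $\deg f=l$ for the least prime $l\nmid k$ (where $q=p^{k}$), noting $l\leq k+1\leq q$. None of these points is routine, so as it stands the proposal is a plausible plan rather than a proof.
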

This result fits into a long history of definitions of valuation rings in valued fields. In the particular case of power series fields, a lot is already known.
\begin{observation}\label{prp:other.power.series.fields}
$K[[t]]$ is not $\exists$-$\emptyset$-definable in $K((t))$ for $K=\mathbb{Q}_{p},\mathbb{C}$.
\end{observation}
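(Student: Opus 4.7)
The plan is a proof by contradiction. Assume $\phi(x)$ is an existential $\emptyset$-formula with $\phi(K((t))) = K[[t]]$. For both $K=\mathbb{C}$ and $K=\mathbb{Q}_p$, the ring $K[[t]]$ is the unique nontrivial henselian valuation ring of $K((t))$, hence already $\emptyset$-definable in $\mathcal{L}_{\mathrm{ring}}$; so by elementarity, in any elementary extension $(K((t))^*,v^*)$ the formula $\phi$ defines exactly the valuation ring $\mathcal{O}_{v^*}$. Existential formulas are preserved upward under $\mathcal{L}_{\mathrm{ring}}$-embeddings, so every field embedding $\sigma : K((t)) \hookrightarrow K((t))^*$ must satisfy $\sigma(K[[t]]) \subseteq \mathcal{O}_{v^*}$, and in particular $v^*(\sigma(a)) \geq 0$ for every $a \in K[[t]]$.

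The strategy is to exhibit, for each $K$, an $\mathcal{L}_{\mathrm{ring}}$-embedding $\sigma : K((t)) \hookrightarrow K((t))^*$ with $v^*(\sigma(t)) < 0$, which will contradict the above applied to $a=t$. I would fix a sufficiently saturated elementary extension $(K((t))^*,v^*)$, choose $b \in K((t))^*$ transcendental over $K$ with $v^*(b)$ a negative nonstandard element of the value group divisible by every standard $n$, and build $\sigma$ with $\sigma|_K = \mathrm{id}_K$ and $\sigma(t)=b$ by a transfinite back-and-forth: start from the $K$-algebra isomorphism $K(t) \cong K(b)$, extend along a transcendence basis of $K((t))$ over $K(t)$ by mapping it to an algebraically independent family in $K((t))^*$ over $K(b)$ (possible by saturation and cardinality), and then extend over algebraic elements. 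Given such a $\sigma$, preservation of $\phi$ applied to $t$ yields $v^*(b) \geq 0$, contradicting the choice of $b$.

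The main obstacle is the final algebraic step of the back-and-forth: one must realise in $K((t))^*$ every algebraic relation in $K((t))$ over the chosen transcendence basis, under the twist $t \mapsto b$. The choice of $v^*(b)$ divisible by every standard $n$ is what makes this feasible, as it forces compatibility with the tame algebraic extensions $K((t^{1/n}))$. For $K=\mathbb{C}$ the absolute Galois group of $\mathbb{C}((t))$ is the pro-cyclic $\widehat{\mathbb{Z}}$ and every algebraic extension is an $n$-th root extension, so the relevant types are readily realised in $K((t))^*$ by saturation. For $K=\mathbb{Q}_p$ the absolute Galois group of $\mathbb{Q}_p((t))$ is far more intricate, reflecting the full arithmetic of $\mathbb{Q}_p$, and the back-and-forth requires both finer saturation and an appeal to Ax--Kochen--Ershov-style transfer for henselian fields of mixed characteristic to control the algebraic extensions under the twist.
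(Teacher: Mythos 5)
Your overall strategy---push $t$ into an elementary extension via a ring embedding that is not a valued-field embedding, and use upward persistence of existential formulas---is legitimate in outline, but the proof has a genuine gap exactly where you locate ``the main obstacle'', and that obstacle is not resolved. Two side issues first: $K[[t]]$ is \emph{not} the unique nontrivial henselian valuation ring of $K((t))$ for either choice of $K$ (composing $v_t$ with any nontrivial valuation on the algebraically closed residue field $\mathbb{C}$, or with the $p$-adic valuation on $\mathbb{Q}_p$, yields further henselian rings); the $\emptyset$-definability you need is nevertheless true, but for a different reason (e.g.\ \autoref{prp:Koe.def}: henselian of rank $1$ with non-divisible value group). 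Also, the algebraic extensions you must control are not the $K((t^{1/n}))$ --- these are not subfields of $K((t))$, and $t^{1/n}\notin K((t))$; what must be preserved are the algebraic relations \emph{inside} $K((t))$ over $K(t)$ produced by Hensel's Lemma, e.g.\ $\sqrt{1+t}$. More seriously, the proposed back-and-forth (map a transcendence basis of $K((t))$ over $K(t)$ to an algebraically independent family, then ``extend over algebraic elements'') cannot be completed as stated: algebraic independence of the image gives no control over which algebraic extensions of the image field are realised inside $K((t))^{*}$, and the only available handle on this is the valuation, which your construction ignores until the final step. To make the embedding exist one has to run the whole construction as a \emph{valued}-field embedding of $(K((t)),u)$ into $(K((t))^{*},v^{*})$ for a suitable valuation $u$ extending the degree valuation $v_{\infty}$ of $K(t)$ (necessarily $u\neq v_t$, since $u(t)<0$), invoking saturation together with Kaplansky/Ax--Kochen--Ershov-type embedding lemmas in residue characteristic $0$, and for $K=\mathbb{Q}_p$ additionally arranging that the residue field of $u$ remains formally $p$-adic. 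None of this is routine, and you concede as much for $\mathbb{Q}_p$.

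The paper's proof avoids elementary extensions entirely. It passes to the Puiseux series field $K((t))^{\mathrm{Px}}=\bigcup_{n}K((t^{1/n}))$, a directed union of isomorphic copies of $K((t))$: an existential $\emptyset$-formula defining $K[[t]]$ would define $K[[t^{1/n}]]$ in each member of the chain and hence $K[[t]]^{\mathrm{Px}}$ in the union. For $K=\mathbb{C}$ this contradicts Puiseux's Theorem (an algebraically closed field has no infinite co-infinite definable sets); for $K=\mathbb{Q}_p$ it contradicts $\mathbb{Q}_p\preceq\mathbb{Q}_p((t))^{\mathrm{Px}}$, since the formula would define a proper subset containing $\mathbb{Q}_p$. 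This is a short argument once persistence of existential formulas along unions of chains is observed, and I would recommend it over repairing the embedding construction.
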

\begin{proof}
Let $K((t))^{\mathrm{Px}}:=\bigcup_{n<\omega}K((t^{1/n}))$ be the field of Puiseux series and let $K[[t]]^{\mathrm{Px}}:=\bigcup_{n<\omega}K[[t^{1/n}]]$. If $K[[t]]$ is $\exists$-$\emptyset$-definable in $K((t))$ then $K[[t]]^{\mathrm{Px}}$ is $\exists$-$\emptyset$-definable in $K((t))^{\mathrm{Px}}$ by the same formula. If $K=\mathbb{C}$ then, by Puiseux's Theorem, $\mathbb{C}((t))^{\mathrm{Px}}$ is algebraically closed and thus no infinite co-infinite subsets are definable. In particular, $\mathbb{C}[[t]]^{\mathrm{Px}}$ is not definable.

Now let $K=\mathbb{Q}_{p}$ and let $\phi$ be an existential formula (with no parameters). Suppose that $\phi$ defines $\mathbb{Q}_{p}[[t]]$ in $\mathbb{Q}_{p}((t))$; then in $\mathbb{Q}_{p}((t))^{\mathrm{Px}}$ the formula $\phi$ defines $\mathbb{Q}_{p}[[t]]^{\mathrm{Px}}$, which is a proper subring. Note also that $\mathbb{Q}_{p}$ is contained in this definable set. The field $\mathbb{Q}_{p}((t))^{\mathrm{Px}}$ is $p$-adically closed, thus $\mathbb{Q}_{p}\preceq\mathbb{Q}((t))^{\mathrm{Px}}$. Thus $\phi$ defines $\mathbb{Q}_{p}$ in $\mathbb{Q}_{p}$, which is \em not \rm a proper subset. This contradicts the elementary equivalence of $\mathbb{Q}_{p}$ and $\mathbb{Q}_{p}((t))^{\mathrm{Px}}$.
\end{proof}

In the field $\mathbb{Q}_{p}$ the valuation ring $\mathbb{Z}_{p}$ is $\exists$-$\emptyset$-definable by the formula $\exists y\;1+x^{l}p=y^{l}$, for any prime $l\neq p$. This formula is not, however, uniform in $p$. Analogies between $\mathbb{Q}_{p}$ and $\mathbb{F}_{p}((t))$ naturally suggest the first, `folkloric' definition of $\mathbb{F}_{q}[[t]]$ in $\mathbb{F}_{q}((t))$, which is given in the following fact.

\begin{fact}\label{prp:folklore}
$\mathbb{F}_{q}[[t]]$ is defined in $\mathbb{F}_{q}((t))$ by the existential formula $\exists y\;1+x^{l}t=y^{l}$, for any prime $l$ such that $l\nmid q$.
\end{fact}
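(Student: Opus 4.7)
The plan is to verify the two set-theoretic inclusions separately. Let $\phi(x)$ denote the formula $\exists y\; 1 + x^l t = y^l$, and let $v$ denote the $t$-adic valuation on $\mathbb{F}_q((t))$ (so that $v(t)=1$ and $\mathbb{F}_q[[t]] = \{x : v(x) \geq 0\}$).

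For the inclusion $\mathbb{F}_q[[t]] \subseteq \phi(\mathbb{F}_q((t)))$, I would apply Hensel's Lemma to the polynomial $f(Y) := Y^l - (1 + x^l t) \in \mathbb{F}_q[[t]][Y]$ at the approximate root $Y = 1$, assuming $x \in \mathbb{F}_q[[t]]$. Then $f(1) = -x^l t \in t\mathbb{F}_q[[t]]$, so $v(f(1)) \geq 1$, while $f'(1) = l$ is a unit in $\mathbb{F}_q[[t]]$ because $l \nmid q$ implies $l \neq p$ and hence $l \neq 0$ in $\mathbb{F}_q$. Thus $v(f(1)) > 2v(f'(1)) = 0$, and Hensel's Lemma produces $y \in \mathbb{F}_q[[t]]$ with $y^l = 1 + x^l t$.

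For the converse inclusion, suppose $x \in \mathbb{F}_q((t)) \setminus \mathbb{F}_q[[t]]$, so that $v(x) \leq -1$. I would compute
\begin{equation*}
v(x^l t) = l \cdot v(x) + 1 \leq -l + 1 < 0,
\end{equation*}
so $v(1 + x^l t) = v(x^l t) = l v(x) + 1$. If there existed $y \in \mathbb{F}_q((t))$ with $y^l = 1 + x^l t$, then $l v(y) = l v(x) + 1$, giving $l \mid 1$, a contradiction. Hence no such $y$ exists and $x \notin \phi(\mathbb{F}_q((t)))$.

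There is no real obstacle here; the only subtlety is ensuring $l$ is invertible in the residue field, which is precisely guaranteed by the hypothesis $l \nmid q$, and noticing that the $+1$ introduced by the factor of $t$ is exactly what breaks divisibility by $l$ in the valuation argument. Together the two inclusions give $\phi(\mathbb{F}_q((t))) = \mathbb{F}_q[[t]]$, as claimed.
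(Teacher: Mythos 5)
Your proposal is correct and follows essentially the same route as the paper: the forward inclusion via Hensel's Lemma (the paper phrases it as $1+x^{l}t\in 1+\mathcal{M}=(1+\mathcal{M})^{l}$ by henselianity, which is exactly your Newton-approximation argument at $Y=1$ with $f'(1)=l$ a unit), and the converse via the observation that $v_{t}(1+x^{l}t)=1+lv_{t}(x)$ is not divisible by $l$. No gaps; your write-up just makes the Hensel step more explicit.
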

\begin{proof}
Let $\mathcal{O}:=\mathbb{F}_{q}[[t]]$ denote the valuation ring and $\mathcal{M}:=t\mathcal{O}$ the maximal ideal. Suppose that $x\in\mathcal{O}$. Clearly $1+x^{l}t\in 1+\mathcal{M}=(1+\mathcal{M})^{l}$ by henselianity. Conversely, suppose $x$ is such that $v_{t}x<0$. Then $v_{t}x^{l}\leq-l$ and $v_{t}(x^{l}t)\leq 1-l<0$. Thus $v_{t}(1+x^{l}t)=v_{t}(x^{l}t)=1+lv_{t}x$ cannot be divisible by $l$ and there can exist no $y$ such that $1+x^{l}t=y^{l}$.
\end{proof}
Other definitions are also well-known. One example is an $\exists\forall\exists\forall$-definition with no parameters due to Ax, from \cite{Ax65}, which applies to all power series fields.
\begin{fact}\label{prp:Ax.def}\bf (Implicit in \cite{Ax65}) \rm\em
Let $F$ be any field. Then $F[[t]]$ is $\exists\forall\exists\forall$-$\emptyset$-definable in $F((t))$.
\end{fact}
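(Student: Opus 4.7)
The essential difference between the Ax fact and \autoref{prp:folklore} is the absence of parameters: the folklore formula $\phi_l(x) = \exists y\,(1 + x^l t = y^l)$ uses $t$, which is not a constant of $\mathcal{L}_{\mathrm{ring}}$. My plan is to eliminate $t$ by quantifying existentially over a substitute ``uniformizer'' $s$, at the cost of additional quantifier alternations.

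Specifically, I would aim for a formula of the form
\[
\psi(x) \;:=\; \exists s\ \forall u\ \exists y\ \forall z\ \chi(x, s, u, y, z),
\]
where $\chi$ is quantifier-free. The outer $\exists s$ guesses a substitute for $t$ (witnessed by $t$ itself); the inner $\exists y$ plays the role of the folklore witness in $1 + x^l s = y^l$; and the two $\forall$-quantifiers encode the constraint that $v(s) > 0$ and $v(s)$ is coprime to $l$. Once this constraint holds, the valuation-parity argument of \autoref{prp:folklore} goes through with $s$ in place of $t$: Hensel supplies the $l$-th root for $x \in F[[t]]$, while parity forbids it for $x \notin F[[t]]$.

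Uniformity in $F$ is obtained by taking a disjunction over $l \in \{2, 3\}$, so that at least one $l$ is coprime to $\mathrm{char}(F)$; the disjunction does not increase quantifier depth. One then checks that, when $l = \mathrm{char}(F)$, the corresponding disjunct at worst defines a subset of $F[[t]]$ (by a Frobenius argument: $y^l$ lives in $F^l((t^l))$ whereas $x^l s$ lives at exponents incompatible with $l$), so the union still equals $F[[t]]$.

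\textbf{Main obstacle.} The crux is expressing ``$v(s) > 0$ and $v(s)$ coprime to $l$'' via a $\forall\exists$-clause in the field language, avoiding circular reference to $F[[t]]$. The first condition is naturally captured by $\exists z\,(z^l = 1 + s)$; the second is the negation of ``$s$ is a unit times an $l$-th power'', which expands to its own nested $\forall\exists$-block. Assembling these pieces precisely within $\exists\forall\exists\forall$ -- rather than spilling into higher quantifier complexity -- is the chief technical task, and is presumably why the bound stated by Ax is $\exists\forall\exists\forall$ rather than something simpler.
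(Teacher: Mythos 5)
The paper states this fact without proof (it is attributed to Ax), so there is no in-paper argument to compare against; judged on its own, your proposal is a plan whose decisive step is missing, and the specific suggestions you make for that step do not work. First, $\exists z\,(z^{l}=1+s)$ does not express $v(s)>0$: for instance $s=y^{l}-1$ satisfies it for every $y$, including $y$ of negative value, so this clause admits $s$ of arbitrarily negative valuation. Second, rendering ``$v(s)$ coprime to $l$'' as ``$s$ is not a unit times an $l$-th power'' is circular, since ``unit'' already presupposes the valuation ring you are defining. Third, even the target condition ``$v(s)>0$ and $\gcd(v(s),l)=1$'' is insufficient for the parity argument of \autoref{prp:folklore}: if $v(s)=l+1$ and $v(x)=-1$, then $v(sx^{l})=1>0$, so $1+sx^{l}$ is a $1$-unit and hence an $l$-th power by Hensel's Lemma, and your disjunct accepts an $x\notin F[[t]]$. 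What the argument actually requires is $0<v(s)<l$ (for $l=2$, that $s$ be a uniformiser), which is strictly harder to pin down.

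The structural difficulty is that your formula opens with $\exists s$, so a single admissible-but-bad $s$ already inflates the defined set beyond $F[[t]]$; the admissibility predicate must therefore carve out exactly a nonempty subset of $\{s:0<v(s)<l\}$, and doing that parameter-freely is essentially the original problem of defining the maximal ideal. A more robust shape is $\forall s\,(\theta(s)\to\exists y\;y^{l}=1+sx^{l})$, where one only needs every $s$ satisfying $\theta$ to have $v(s)>0$ and at least one such $s$ to have $0<v(s)<l$; but the implication ``$\theta(s)\Rightarrow v(s)>0$'' is again the same obstacle, and overcoming it is where the real content of Ax's argument (and of the quantifier count $\exists\forall\exists\forall$) lies. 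As it stands, your write-up identifies the crux correctly but does not cross it, so it is not yet a proof.
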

Another definition, in even greater generality, which uses no parameters is due to the second author and is from \cite{Koenigsmann04}. However, this definition is not existential.
\begin{fact}\label{prp:Koe.def}\bf (Lemma 3.6, \cite{Koenigsmann04}) \rm\em
Let $F$ be any field and suppose that $\mathcal{O}$ is an henselian rank $1$ valuation ring on $F$ with a non-divisible value group. Then $\mathcal{O}$ is $\emptyset$-definable.
\end{fact}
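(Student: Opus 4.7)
The strategy is to combine Hensel's lemma with the non-divisibility of $\Gamma$, generalising the argument behind \autoref{prp:folklore} and then removing parameters. By non-divisibility, fix a prime $l$ with $l\Gamma \neq \Gamma$. I treat the case $l \neq \mathrm{char}(k)$ in detail, where $k$ denotes the residue field; the case $l = \mathrm{char}(k)$ is parallel, with Artin--Schreier theory replacing Kummer theory.

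The core observation is the following parameterised test. For $a \in F$ with $v(a) > 0$ and $v(a) \notin l\Gamma$, consider the formula $\exists y\,(1 + ax^l = y^l)$. If $v(x) \geq 0$, then $v(ax^l) \geq v(a) > 0$, so $1 + ax^l \in 1 + \mathcal{M}$, which lies in $F^{\times l}$ by applying Hensel's lemma to $Y^l - (1+ax^l)$ at $Y_0 = 1$ (using $l \neq \mathrm{char}(k)$ so that the derivative is a unit). If instead $v(x) < 0$ and $v(a) < -l v(x)$, then $v(1+ax^l) = v(a) + l v(x) \equiv v(a) \pmod{l\Gamma}$ lies outside $l\Gamma$, so $1+ax^l$ cannot be an $l$-th power. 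In rank $1$, however, $\Gamma$ may be dense in $\mathbb{R}$, so the bound $v(a) < -lv(x)$ cannot be met for every $x$ of negative valuation by a single $a$. This forces the definition to use a universal quantifier over admissible test parameters.

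Accordingly, I would look for a defining formula of the shape
\[
\phi(x) \;:=\; \forall a\,\bigl(\psi(a) \rightarrow \exists y\,(1 + ax^l = y^l)\bigr),
\]
where $\psi$ is a parameter-free formula picking out a set $A \subseteq F$ of test parameters satisfying two conditions: first, every $a \in A$ must satisfy $v(a) > 0$, so that the forward direction for $x \in \mathcal{O}$ follows from henselianity as above; second, for every $x \in F$ with $v(x) < 0$ there must exist some $a \in A$ with $v(a) \notin l\Gamma$ and $0 < v(a) < -lv(x)$, so as to witness $x \notin \mathcal{O}$. The rank-$1$ hypothesis supplies such $a$ in abundance, since $l\Gamma$ is a proper subgroup of $\Gamma$ and $\Gamma$ is order-isomorphic to a subgroup of $\mathbb{R}$. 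A natural first attempt is $\psi(a) := (a \neq 0 \land 1 - a \in F^{\times l})$, since henselianity gives $\mathcal{M} \subseteq \{a : 1 - a \in F^{\times l}\}$, and in the rank-$1$ setting this set meets every positive valuation in $\Gamma \setminus l\Gamma$.

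The main obstacle is refining $\psi$ so that no $a$ of non-positive valuation slips in: units and elements of negative valuation can also satisfy $1 - a \in F^{\times l}$, and for such an $a$ together with certain $x \in \mathcal{O}$ (for example with $v(x) = 0$) the quantity $1 + ax^l$ can fail to be an $l$-th power, breaking the forward direction. Pinning $\psi$ down so that $\{a : \psi(a)\}$ sits inside $\mathcal{M}$ yet remains cofinal near $0$ in valuation is the heart of the proof; this is exactly the sort of task addressed by the Prestel--Ziegler characterisation of $V$-topologies on henselian rank-$1$ valued fields, which provides a parameter-free first-order description of the bounded neighbourhoods of $0$ using $l$-th power obstructions. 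Carrying out this tuning without parameters is where the bulk of the work lies. The resulting definition is intrinsically $\forall\exists$ in shape, consistent with the remark elsewhere in the introduction that no existential definition is expected at this level of generality.
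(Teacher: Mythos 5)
There is nothing to compare against here: the paper offers no proof of this Fact, only the citation to Lemma~3.6 of \cite{Koenigsmann04}, so your sketch has to stand on its own --- and it does not yet. The power-residue observation at its core is correct (for $v(x)\geq 0$ and $v(a)>0$ one gets $1+ax^{l}\in 1+\mathcal{M}\subseteq F^{\times l}$ when $l\neq\mathrm{char}(k)$; for $v(x)<0$ and $0<v(a)<-lv(x)$ with $v(a)\notin l\Gamma$ the value of $1+ax^{l}$ obstructs $l$-th powers), but the step you defer --- producing a parameter-free $\psi$ whose solution set $A$ lies inside $\mathcal{M}$ and has values in $\Gamma\setminus l\Gamma$ cofinal at $0^{+}$ --- is not a refinement to be ``tuned'' later; it is essentially the lemma itself. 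A $\emptyset$-definable $A$ squeezed between (a cofinal part of) $\mathcal{M}$ and $\mathcal{O}$ already pins down the valuation topology, so you have reduced the statement to one of comparable strength. Moreover, the tool you point to for closing the gap does not apply: the Prestel--Ziegler/Prestel machinery (\autoref{lem:def.of.nhbhds}) yields a \emph{bounded neighbourhood of $0$}, and the passage from such a set to one trapped between $\mathcal{M}$ and $\mathcal{O}$ carried out in \autoref{prp:fiddling.with.V} leans on discreteness of the value group (steps like $v(a)\leq -1\Rightarrow v(a^{h})\leq -h$ fail when elements of value in $(-1/h,0)$ exist) --- that is, it works precisely in the case where your universal quantifier over test parameters is unnecessary, and breaks in the dense case where you need it. The parameter-removal step is a further obstruction for arbitrary $F$: the paper's trick uses that $\mathbb{F}_{q}$ is cut out by $x^{q}-x\dot{=}0$ and that $\mathbb{F}_{p}$ consists of closed terms, neither of which is available in general.

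Separately, your case split is incomplete. Non-divisibility only guarantees \emph{some} prime $l$ with $l\Gamma\neq\Gamma$, and it can happen that the only such prime is the residue characteristic $p$ while $\mathrm{char}(F)=0$ (e.g. $\Gamma=\mathbb{Z}[1/m:\ p\nmid m]$). In that mixed-characteristic situation Artin--Schreier theory is unavailable (it requires $\mathrm{char}(F)=l$) and $1+\mathcal{M}\subseteq F^{\times p}$ generally fails, since the derivative of $Y^{p}-u$ at $Y=1$ is not a unit; one needs the refined Hensel estimate ($1+p^{2}\mathcal{M}\subseteq F^{\times p}$, say) and the resulting shift by $2v(p)$ has to be absorbed into the choice of test elements. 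So the dismissal of the case $l=\mathrm{char}(k)$ as ``parallel'' is not justified as written. To actually prove the Fact you would need either to construct the test set $A$ by a genuinely different mechanism (Koenigsmann's argument builds the valuation ring directly from the subgroup $F^{\times l}$, which is parameter-free from the start), or to find another route entirely; as it stands the proposal identifies the right obstacles but does not overcome them.
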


Recent work of Cluckers-Derakhshan-Leenknegt-Macintyre on the uniformity of definitions of valuation rings in henselian valued fields includes the following theorem in the expanded language $\mathcal{L}_{\mathrm{ring}}\cup\{P_{2}\}$, where the Macintyre predicate $P_{2}$ is interpreted as the set of squares.

\begin{fact}\label{fact:CDLM.theorem.3}\rm(\bf Theorem 3, \cite{Cluckers-Derakhshan-Leenknegt-Macintyre12}\rm)\em There is an existential formula $\phi$ in $\mathcal{L}_{\mathrm{ring}}\cup\{P_{2}\}$ which defines the valuation ring in all henselian valued fields $K$ with finite or pseudo-finite residue field of characteristic not equal to $2$.
\end{fact}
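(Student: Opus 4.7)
The plan is to construct a single existential $\mathcal{L}_{\mathrm{ring}} \cup \{P_2\}$-formula $\phi(x)$ that defines the valuation ring $\mathcal{O}_K$ uniformly across all henselian valued fields $K$ in the stated class, where $k$ denotes the residue field and $\Gamma$ the value group. Two standard ingredients form the foundation: (i) Hensel's Lemma applied to $Y^2 - u$ gives $1 + \mathfrak{m} \subseteq (K^*)^2$ whenever the residue characteristic is odd; and (ii) for any finite or pseudo-finite field $k$ of odd characteristic, $[k^* : (k^*)^2] = 2$ and every element of $k$ is a sum of two squares, so the square-class group of $K$ decomposes canonically as $K^*/(K^*)^2 \cong (k^*/(k^*)^2) \oplus (\Gamma/2\Gamma)$ (via any choice of uniformizer, when $\Gamma \neq 2\Gamma$).

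From this structural decomposition, $a \in K^*$ lies in $\mathcal{O}^* \cdot (K^*)^2$ exactly when its image in $\Gamma/2\Gamma$ is trivial, i.e. when $v(a)$ is even. The existential formula must therefore uniformly detect ``even valuation'' using only $P_2$ and no parameters. I would seek a template of the form
\[
\phi(x) \equiv \exists y_1, \ldots, y_n \; \Bigl( \bigwedge_i P_2\bigl(f_i(x, \bar y)\bigr) \;\wedge\; \bigwedge_j g_j(x, \bar y) = 0 \Bigr),
\]
where the $f_i$ are arranged so that Hensel's Lemma supplies the required square roots when $v(x) \geq 0$, while an odd-valuation obstruction blocks every attempted witness when $v(x) < 0$. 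A Kochen-style expression such as $1 + 4x^2 y$, combined with auxiliary constraints forcing $y$ to play the role of a ``test uniformizer'', is a natural starting point: when $x \in \mathcal{O}$, one constructs witnesses from Hensel-lifts of residue-field data (using the two-squares property to guarantee enough freedom in $k$); when $x \notin \mathcal{O}$, the valuation of $1 + 4x^2 y$ is pinned down by $v(x)$ and parity considerations force it outside $P_2$ regardless of how $\bar y$ is chosen.

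The main obstacle is genuine uniformity. The formula must work simultaneously across all residue characteristics $\neq 2$ (including residue characteristic zero) and across finite as well as pseudo-finite residue fields, with no name for the residue characteristic, for a non-square unit, or for a uniformizer. This is where Ax's elementary theory of pseudo-finite fields enters: since every pseudo-finite field is elementarily equivalent to an ultraproduct of finite fields of odd characteristic, any uniform existential behaviour of squares in finite fields transfers to pseudo-finite $k$ and, via Hensel's Lemma, to $K$. Subsidiary technical points include the degenerate case $\Gamma = 2\Gamma$ (where the decomposition collapses and a separate disjunct must capture $\mathcal{O}$ directly, for instance by noting that $\mathcal{O} = K$ or by using a different square-class invariant), and the inclusion of $0 \in \mathcal{O}$, which should be handled by an explicit disjunct ``$x = 0$'' in $\phi$.
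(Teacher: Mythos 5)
This statement is not proved in the paper at all: it is imported verbatim, as a \emph{Fact} with a citation, from Theorem 3 of \cite{Cluckers-Derakhshan-Leenknegt-Macintyre12}. So there is no in-paper argument to measure your attempt against; what follows judges the proposal on its own terms. As written it is a plan rather than a proof --- no formula is actually exhibited and verified --- and the plan has a gap that is fatal to the strategy, not merely a technicality.

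Your mechanism is to detect $\mathcal{O}$ through the parity of the valuation, via $K^{\times}/(K^{\times})^{2}\cong k^{\times}/(k^{\times})^{2}\oplus\Gamma/2\Gamma$ and a Kochen-style term $1+4x^{2}y$ whose square-class is governed by $2v(x)+v(y)$. First, the class of fields in the statement contains henselian fields with non-trivial valuation and $2$-divisible value group: the Puiseux series field $\bigcup_{n}\mathbb{F}_{q}((t^{1/n}))$ of \autoref{thm:Puiseux} (for odd $q$) has value group $\mathbb{Q}$ and residue field $\mathbb{F}_{q}$. Hence your fallback for the case $\Gamma=2\Gamma$, ``noting that $\mathcal{O}=K$'', is simply false, and in that case $K^{\times}/(K^{\times})^{2}$ collapses to $k^{\times}/(k^{\times})^{2}$, so \emph{no} square-class invariant sees the valuation at all; this is not a degenerate corner to be patched by a disjunct but a regime in which the whole approach carries no information. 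Second, even for $\Gamma=\mathbb{Z}$ the entire difficulty sits inside the clause you leave unspecified, the ``auxiliary constraints forcing $y$ to play the role of a test uniformizer''. Because the formula is existential, one bad witness produces a false positive: for any $x\notin\mathcal{O}$, any $y$ with $v(y)>-2v(x)$ gives $1+4x^{2}y\in 1+\mathfrak{m}_{K}\subseteq(K^{\times})^{2}$. So the constraints must exclude \emph{every} such $y$, with no parameters, uniformly in the residue field and the value group; note that $\neg P_{2}(y)$ does not force $v(y)\notin 2\Gamma$ (non-square units also satisfy it), and in non-discrete value groups there is no uniformizer to emulate. Producing such constraints is precisely the content of the cited theorem (and the analogous bottleneck that \autoref{prp:fiddling.with.V} resolves, by entirely different means, in the present paper). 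The ingredients you list --- $1+\mathfrak{m}_{K}\subseteq(K^{\times})^{2}$ from Hensel's Lemma, $[k^{\times}:(k^{\times})^{2}]=2$, sums of two squares, and Ax's transfer between finite and pseudo-finite fields --- are the right raw materials, but the construction assembling them into a correct uniform formula is exactly what is missing.
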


One consequence of \autoref{thm:E-0-def.of.val.ring} is in the study of definability in $\mathbb{F}_{q}((t))$: it reduces questions of existential definability in the language of valued fields (for example $\mathcal{L}_{\mathrm{ring}}$ expanded with a unary prediate for the valuation ring) to existential definability in $\mathcal{L}_{\mathrm{ring}}$ conservatively in parameters; i.e. without needing more parameters.

It is famously unknown whether or not the theory of $\mathbb{F}_{p}((t))$ is decidable, whereas $\mathbb{Q}_{p}$ is decidable by the work of Ax-Kochen and Ershov. In \cite{Denef-Schoutens03} Denef and Schoutens prove that Hilbert's 10th problem has a positive solution in $\mathbb{F}_{q}[[t]]$ (in the language $\mathcal{L}_{\mathrm{ring}}\cup\{t\}$ of discrete valuation rings) on the assumption of Resolution of Singularities in characteristic $p$. As a consequence of \autoref{thm:E-0-def.of.val.ring}, we prove in \autoref{cor:H10} that Hilbert's 10th problem in $\mathcal{L}_{\mathrm{ring}}$ has a solution over $\mathbb{F}_{q}((t))$ if and only if it has a solution in $\mathbb{F}_{q}[[t]]$. Of course, the analagous result for the language $\mathcal{L}_{\mathrm{ring}}\cup\{t\}$ follows from the `folkloric' definition in \autoref{prp:folklore}.

As an imperfect field, $\mathbb{F}_{p}((t))$ cannot be model complete in the language of rings; however it is still unknown whether it is model complete in a relatively `nice' expansion of that language, for example some analogy of the Macintyre language (see \cite{Macintyre76}) suitable for positive characteristic.

\section{The $\exists$-$\emptyset$-definition of $\mathbb{F}_{q}[[t]]$ in $\mathbb{F}_{q}((t))$}
\label{section:E-0-def}

Let $v_{t}$ be the $t$-adic valuation on $\mathbb{F}_{q}((t))$. The valuation ring of $v_{t}$ is the ring $\mathbb{F}_{q}[[t]]$ of formal power series which has a unique maximal ideal $t\mathbb{F}_{q}[[t]]$. The value group of $v_{t}$ is $\mathbb{Z}$ and the reside field is $\mathbb{F}_{q}$. Importantly, the valued field $(\mathbb{F}_{q}((t)),\mathbb{F}_{q}[[t]])$ is henselian.

\subsection{Spheres and balls in valued fields}

We briefly make a few definitions and notational conventions. Let $(K,\mathcal{O})$ be a valued field, let $v$ be the corresponding valuation, and let $vK$ denote the value group.

\begin{definition}\label{def:open.and.closed.balls}
For $n\in vK$ and $a\in K$, we let
\begin{enumerate}
\item $S(n):=v^{-1}(\{n\})$ be the set of elements of value $n$,
\item $B(n;a):=a+v^{-1}((n,\infty))$ be the \em open ball \rm of radius $n$ around $a$, and
\item $\bar{B}(n;a):=a+v^{-1}([n,\infty))$ be the \em closed ball \rm of radius $n$ around $a$.
\end{enumerate}
\end{definition}
\noindent We let $\sqcup$ denote a disjoint union.
\begin{lemma}\label{lem:spheres.and.balls}
Let $n\in vK$. Then
\begin{enumerate}
\item $B(n;0)\subseteq S(n)-S(n)$,
\item $\bar{B}(n;0)= S(n)\sqcup B(n;0)$, and
\item $\bar{B}(n;0)-\bar{B}(n;0)=\bar{B}(n;0)$.
\end{enumerate}
\end{lemma}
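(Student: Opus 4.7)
The plan is to handle the three parts independently; all three are direct computations with valuation axioms, and the only mild subtlety is nonemptiness in part (1).

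For part (1), given $x \in B(n;0)$, so $v(x) > n$, the strategy is to fix any witness $a \in S(n)$ (which exists because $n \in vK$ by hypothesis, so some element has value exactly $n$) and write $x = a - (a - x)$. Since $v(a) = n$ and $v(x) > n$, the ultrametric inequality with unequal values forces $v(a - x) = \min(v(a), v(x)) = n$, putting $a - x \in S(n)$. So any $x \in B(n;0)$ is a difference of two elements of $S(n)$.

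For part (2), this is essentially unpacking the definition: $[n, \infty) = \{n\} \sqcup (n, \infty)$ in the value group, so pulling back via $v$ gives $\bar{B}(n;0) = v^{-1}([n,\infty)) = v^{-1}(\{n\}) \sqcup v^{-1}((n,\infty)) = S(n) \sqcup B(n;0)$. The disjointness is automatic since $v$ is a function.

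For part (3), the inclusion $\bar{B}(n;0) - \bar{B}(n;0) \subseteq \bar{B}(n;0)$ follows from the ultrametric inequality: if $v(a), v(b) \geq n$, then $v(a-b) \geq \min(v(a), v(b)) \geq n$. The reverse inclusion is trivial once we note $0 \in \bar{B}(n;0)$ (as $v(0) = \infty \geq n$), so any $x \in \bar{B}(n;0)$ equals $x - 0$. There is no real obstacle here; the only point requiring a moment of care is confirming $S(n) \neq \emptyset$ in part (1), which is given by the assumption $n \in vK$.
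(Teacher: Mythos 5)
Your proof is correct and follows essentially the same route as the paper: pick a witness of value exactly $n$ to express an element of $B(n;0)$ as a difference of two elements of $S(n)$, unpack the definition for (2), and use the ultrametric inequality for (3). If anything, you are slightly more careful than the paper, which leaves the reverse inclusion in (3) (via $x = x - 0$) implicit.
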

\begin{proof}
\begin{enumerate}
\item Let $x\in B(n;0)$ and let $y\in S(n)$. Then $v(y)=n<v(x)$, so that $v(x-y)=n$ (by an elementary consequence of the ultrametric inequality) and $x-y\in S(n)$. Thus $x=x-y+y\in S(n)-S(n)$.
\item Let $x\in\bar{B}(n;0)$. Then either $v(x)=n$ or $v(x)>n$.
\item Let $x,y\in\bar{B}(n;0)$. By the ultrametric inequality $v(x-y)\geq n$. Thus $x-y\in\bar{B}(n;0)$.
\end{enumerate}
\end{proof}

\subsection{An $\exists$-definable filter base for the neighbourhood filter of zero}

Following Prestel and Ziegler in \cite{Prestel-Ziegler78}, we give the definition of a t-henselian field. From another paper of Prestel (\cite{Prestel91}), we recall a definition of the t-henselian topology (in the context of t-henselian non-separably closed fields). We obtain an $\exists$-definable bounded neighbourhood of zero. For more information on t-henselian fields, see \cite{Prestel-Ziegler78}.

For $n\in\mathbb{N}$ and any subset $U\subseteq K$, we denote $x^{n+1}+x^{n}+U[x]^{n-1}:=\{x^{n+1}+x^{n}+u_{n-1}x^{n-1}+...+x_{0}\;|\;u_{i}\in U\}$.

\begin{definition}\label{def:t-hens}
Let $K$ be any field. We say that $K$ is \em t-henselian \rm if there is a field topology $\mathcal{T}$ on $K$ induced by an absolute value or a valuation with the property that, for each $n\in\mathbb{N}$, there exists $U\in\mathcal{T}$ such that $0\in U$ and such that each $f\in x^{n+1}+x^{n}+U[x]^{n-1}$ has a root in $K$.
\end{definition}

The following definition of the t-henselian topology from \cite{Prestel91} corrects an earlier definition given in \cite{Prestel-Ziegler78}. To define a group topology, we mean that a filter base of the filter of neighbourhoods of zero is a definable family.

Let $D:=D_{x}$ denote the formal derivative with respect to the variable $x$.

\begin{lemma}\label{lem:def.of.nhbhds}\bf (Proof of Lemma, \cite{Prestel91}) \rm\em
Suppose that $K$ is t-henselian and not separably closed. Let $f\in K[x]$ be a separable irreducible polynomial without a zero in $K$. Let $a\in K\setminus Z(Df)$ be any element which is not a zero of the formal derivative of $f$. Let $U_{f,a}:={f(K)}^{-1}-{f(a)}^{-1}=\{\frac{1}{f(x)}-\frac{1}{f(a)}\;|\;x\in K\}$. Then $\mathcal{U}:=\{c\cdot U_{f,a}|c\in K^{\times}\}$ is a base for the filter of open neighbourhoods around zero in the (unique) t-henselian topology.
\end{lemma}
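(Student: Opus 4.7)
The plan is to verify that $\mathcal{U}$ satisfies the two essential conditions with respect to the (already established and unique) t-henselian topology $\mathcal{T}$: \textbf{(A)} each member $c\cdot U_{f,a}$ is a $\mathcal{T}$-neighborhood of $0$, and \textbf{(B)} every $\mathcal{T}$-neighborhood of $0$ contains some $c\cdot U_{f,a}$. Downward directedness of the family then follows formally from (A) and (B) together with the fact that $\mathcal{T}$ is a field topology, so $\mathcal{U}$ is a filter base for the $\mathcal{T}$-neighborhood filter at $0$.

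First I would observe that $\psi(x) := 1/f(x) - 1/f(a)$ is well-defined on all of $K$, since $f$ has no root (so $f(a)\neq 0$), and that $\psi(a) = 0$ with $D\psi(a) = -Df(a)/f(a)^2 \neq 0$ by the hypothesis $a\notin Z(Df)$. For \textbf{(A)}, the plan is to realize the equation $\psi(x) = y$, for small $y$, as the vanishing of a polynomial in the normal form of \autoref{def:t-hens}: clear denominators, substitute $x = a + \lambda v$, and choose $\lambda$ (using crucially that $Df(a)\neq 0$) so that after dividing by the leading coefficient the resulting monic polynomial in $v$ has the coefficient of $v^n$ equal to $1$, while the lower-order coefficients can be forced into any prescribed $\mathcal{T}$-neighborhood $U$ of $0$ by taking $y$ small enough. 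Then \autoref{def:t-hens} produces a root $v\in K$, hence an $x\in K$ with $\psi(x) = y$, so $U_{f,a}$ (and thus $c\cdot U_{f,a}$) is a $\mathcal{T}$-neighborhood of $0$.

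For \textbf{(B)}, the key point is that $U_{f,a}$ is $\mathcal{T}$-bounded, which reduces to showing that $f(K)$ is bounded away from $0$ in $\mathcal{T}$. I would argue this by contradiction: if $f(x_n)\to 0$ in $\mathcal{T}$, then by coprimality of $f$ and $Df$ (which holds since $f$ is separable and irreducible, hence a B\'ezout relation $\alpha f + \beta Df = 1$ yields a lower bound on $Df(x_n)$) the translates $h_n(v) := f(x_n + v)$ can be rescaled by the same device as in (A) into the normal form of \autoref{def:t-hens}, producing a root of $f$ in $K$ and contradicting the hypothesis. Granted boundedness, for any prescribed $\mathcal{T}$-neighborhood $V$ of $0$ we can find $c\in K^\times$ with sufficiently large valuation so that $c\cdot U_{f,a}\subseteq V$, giving (B).

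The main obstacle in both (A) and (B) is executing the rescaling step: one must choose $\lambda$ from the leading data of the polynomial so that the transformed equation lands in the restrictive normal form $x^{n+1}+x^n+U[x]^{n-1}$, \emph{and} verify that the remaining coefficients shrink into an arbitrarily small $\mathcal{T}$-neighborhood as $y$ (respectively $f(x_n)$) tends to $0$. This is where the hypotheses $Df(a)\neq 0$ and the coprimality of $f,Df$ do all the work. Once (A) and (B) are in place, the uniqueness of the t-henselian topology from \cite{Prestel-Ziegler78} identifies the topology induced by $\mathcal{U}$ with the t-henselian topology, completing the proof.
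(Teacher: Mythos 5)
The paper does not actually prove this lemma --- it is imported wholesale from Prestel's 1991 paper (resting on Prestel--Ziegler) --- so your attempt has to be judged against the standard argument, and in outline it matches it: (A) $U_{f,a}$ is a neighbourhood of $0$ because $\psi(x)=\frac{1}{f(x)}-\frac{1}{f(a)}$ is ``open at $a$'' where $D\psi(a)\neq 0$, (B) $U_{f,a}$ is bounded because a separable polynomial without a zero in a t-henselian field has $f(K)$ bounded away from $0$, and in a topology induced by an absolute value or valuation the scalar multiples of a bounded neighbourhood of $0$ form a base of the neighbourhood filter. That decomposition, and the reduction of (A) to solving $f(x)=f(a)/(1+yf(a))$ near $a$, are exactly right.

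The step that does not work as you describe it is the normalisation into the form $x^{n+1}+x^{n}+U[x]^{n-1}$ of \autoref{def:t-hens}. Write $f(a+v)-c=\sum_i c_i v^i$ with $c_0=f(a)-c$ small and $c_1=Df(a)\neq 0$ fixed. A substitution $v=\lambda w$ followed by division by the leading coefficient makes the coefficient of $w^{n}$ equal to $c_{n}/(c_{n+1}\lambda)$, so tuning $\lambda$ to force this to be $1$ uses only the two \emph{top} Taylor coefficients (and is impossible when $c_n=0$); worse, with that $\lambda$ the low-order coefficients do not shrink as $c_0\to 0$ --- the coefficient of $w$ stays at the fixed nonzero value $c_1\lambda^{1-(n+1)}/c_{n+1}$. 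The quantities you actually control are the constant and linear coefficients, so they must be moved to the top: substitute $v=-\frac{c_0}{c_1}u$ and divide by $c_0$ to get $p(u)=1-u+d_2u^2+\cdots$ with $d_i=(-1)^i c_i c_0^{i-1}/c_1^{i}\to 0$, then pass to the reciprocal polynomial $u^{\deg p}\,p(1/u)$ and change sign of the variable to reach $u^{n+1}+u^{n}+(\text{small})$; its root near $-1$ unwinds to a root of $f-c$ near $a$. The same repair is needed in your step (B), where you should also first note that any sequence with $f(x_k)\to 0$ is bounded (else the leading term of $f$ dominates), so that $\alpha(x_k),\beta(x_k)$ in the B\'ezout identity are bounded and $Df(x_k)$ is indeed bounded away from $0$. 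With these corrections your argument goes through.
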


We prove a simple consequence of the Lemma.

\begin{proposition}\label{prp:def.of.nhbhds}
Suppose that $C\subseteq K$ is a relatively algebraically closed subfield of $K$ which is not separably closed. There exists $V\subseteq K$ which is an $\exists$-$C$-definable bounded neighbourhood of $0$ in the t-henselian topology.
\end{proposition}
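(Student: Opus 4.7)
The plan is to apply \autoref{lem:def.of.nhbhds} with both the polynomial $f$ and the base point $a$ chosen from $C$ itself. Since $C$ is not separably closed, a primitive element of some nontrivial finite separable extension of $C$ is a root of a monic separable irreducible $f\in C[x]$ of degree $\geq 2$. Because $C$ is relatively algebraically closed in $K$, any root of $f$ in $K$ would be algebraic over $C$ and hence already in $C$; so $f$ has no root in $K$ either.

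Next I would pick $a\in C$ with $Df(a)\neq 0$. If $C$ is infinite this is immediate, since $Df$ is a nonzero polynomial with only finitely many roots. If $C=\mathbb{F}_{q}$ is finite, I choose $f$ of degree $2$ — such polynomials exist over every $\mathbb{F}_{q}$ and are automatically separable — so that $\deg Df\leq 1$ and thus $Df$ has at most one root in $C$, while $|C|\geq 2$ supplies a valid $a$.

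With these choices in hand, \autoref{lem:def.of.nhbhds} gives that $V:=U_{f,a}$ is a neighbourhood of $0$ in the t-henselian topology. Setting $b:=f(a)^{-1}\in C$, the set $V$ is $\exists$-$C$-defined by
\[
\exists x\,\exists z\,\bigl(z\cdot f(x)=1\,\wedge\,y=z-b\bigr),
\]
whose only parameters — the coefficients of $f$ and the constant $b$ — lie in $C$.

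The remaining task, which I expect to be the main technical point, is to verify boundedness of $V$. Since $f$ has no root in $K$, a Krasner-type argument bounds $v(f(x))$ from above on $K$: writing $f(x)=c\prod_{i}(x-\alpha_{i})$ with $\alpha_{i}\in\bar{K}\setminus K$, if $v(x-\alpha_{i})$ were unbounded as $x$ ranged over $K$, Krasner's lemma would force $\alpha_{i}\in K$, a contradiction. Summing, there exists $M$ with $v(f(x))\leq M$ for all $x\in K$ with $f(x)\neq 0$, so $v(1/f(x))\geq -M$ and hence $v(1/f(x)-1/f(a))\geq \min(-M,-v(f(a)))$, placing $V$ inside a closed ball around $0$ as required.
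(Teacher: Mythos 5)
Your construction of $V=U_{f,a}$ is exactly the paper's: pick $f\in C[x]$ separable irreducible of degree $\geq 2$ (degree exactly $2$ when $C$ is finite, so that $Df$ has at most one root and a valid $a\in C$ exists), note that relative algebraic closedness of $C$ in $K$ forces $f$ to have no root in $K$, and read off $\exists$-$C$-definability. Where you diverge is the boundedness step. The paper gets boundedness for free from \autoref{lem:def.of.nhbhds}: since $\{c\cdot U_{f,a}\mid c\in K^{\times}\}$ is a \emph{base} for the neighbourhood filter of $0$, every neighbourhood $U$ of $0$ contains some $cU_{f,a}$, hence $U_{f,a}\subseteq c^{-1}U$, which is the definition of bounded. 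Your Krasner-style estimate $v(f(x))\leq M$ is a correct and more self-contained argument in the setting where it is actually applied ($K=\mathbb{F}_q((t))$ and its variants), but note two things: it uses the full strength of the cited lemma anyway (you still invoke it to know $U_{f,a}$ is a neighbourhood of $0$), and it silently assumes the topology is induced by a \emph{henselian valuation} so that $v$ extends to $\overline{K}$ and Krasner's lemma applies — a t-henselian field in the sense of \autoref{def:t-hens} need not carry such a valuation (the topology may come from an absolute value, or from a valuation that is only topologically henselian). So your route proves the proposition for henselian valued $K$, which suffices for everything downstream, whereas quoting the filter-base property directly yields it at the stated level of generality with no extra work.
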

\begin{proof}
We choose $f\in C[x]$ to be non-linear, irreducible, and separable. Let $n:=\mathrm{deg}(f)$; thus $\mathrm{deg}(Df)\leq n-1$. If $|C|>n-1$ then we may choose $a\in C\setminus Z(Df)$. On the other hand, if $C$ is a finite field, then $C$ allows separable extensions of degree $2$. So we may choose $f$ to be of degree $2$; whence $Df$ is of degree $\leq 1$ and again there exists $a\in C$ which is not a root of $Df$. Let $V:=U_{f,a}=f(K)^{-1}-f(a)^{-1}$. Clearly $V$ is $\exists$-$C$-definable. As discussed in \autoref{lem:def.of.nhbhds}, $V$ is a bounded neighbourhood of $0$.
\end{proof}

\subsection{An $\exists$-$F$-definable set between $\mathcal{O}$ and $\mathcal{M}$ in $F((t))$}

Now let $K:=F((t))$ be the field of formal power series over a field $F$. Let $v$ be the $t$-adic valuation, let $\mathcal{O}:=F[[t]]$ be the valuation ring of $v$, let $\mathcal{M}:=t\mathcal{O}$ be its maximal ideal, and let $vK=\mathbb{Z}$ be its value group. Note that $(K,\mathcal{O})$ is henselian. Let $C\subseteq K$ be any subset. Let $\mathcal{P}:=S(1)$ be the set of elements of value $1$; thus $\mathcal{P}$ is the set of uniformisers.

In the following proposition we show how to `tweak' a definable bounded neighbourhood of $0$ until we obtain a subset of $\mathcal{O}$ containing $\mathcal{M}$, in such a way as to preserve definability.

\begin{proposition}\label{prp:fiddling.with.V}
Suppose that $V\subseteq K$ is an $\exists$-$C$-definable bounded neighbourhood of $0$.
\begin{enumerate}
\item There exists $W\subseteq K$ which is bounded, $\exists$-$C$-definable, and is such that $\mathcal{P}\subseteq W$.
\item There exists $X\subseteq K$ which is bounded, $\exists$-$C$ definable, and is such that $\mathcal{M}\subseteq X$.
\item There exists $Y\subseteq K$ which is bounded by $\mathcal{O}$, $\exists$-$C$-definable, and is such that $\mathcal{M}\subseteq Y$.
\end{enumerate}
\end{proposition}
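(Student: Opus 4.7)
The plan is to construct all three sets as $k$-th power preimages of $V$, namely $V^{1/k}:=\{x\in K : x^{k}\in V\}$, for suitably chosen positive integers $k$. This set is automatically $\exists$-$C$-definable: if $\varphi(z)$ is an existential formula over $C$ defining $V$, then $\varphi(x^{k})$ defines $V^{1/k}$. Since $V$ is a bounded neighbourhood of $0$ in the $t$-adic topology, fix integers $N,M$ (with $M\le N+1$) such that $B(N;0)\subseteq V\subseteq\bar{B}(M;0)$. The valuation behaviour of $V^{1/k}$ then scales by $k$: it contains every $x$ with $v(x)>N/k$ and is contained in the set of $x$ with $v(x)\ge M/k$. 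By choosing $k$ large enough we can both push the inner containment down to include $\mathcal{P}$ (or $\mathcal{M}$) and, for part (3), simultaneously pull the outer containment up to lie inside $\mathcal{O}$.

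For parts (1) and (2), choose any integer $k_{1}\ge\max(N+1,1)$ and set $W:=X:=V^{1/k_{1}}$. For any $x$ with $v(x)\ge 1$ (in particular any element of $\mathcal{P}$ or $\mathcal{M}$), we have $v(x^{k_{1}})=k_{1}v(x)\ge k_{1}>N$, so $x^{k_{1}}\in B(N;0)\subseteq V$ and hence $x\in W$. This gives $\mathcal{M}\subseteq W$ and a fortiori $\mathcal{P}\subseteq W$. Boundedness is immediate: if $x\in W$ then $x^{k_{1}}\in\bar{B}(M;0)$, forcing $v(x)\ge M/k_{1}$, a fixed lower bound.

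For part (3) we take a larger exponent $k_{2}\ge\max(N+1,-M+1,1)$ and set $Y:=V^{1/k_{2}}$. The inclusion $\mathcal{M}\subseteq Y$ follows exactly as in (1). Conversely, if $x\in Y$ then $x^{k_{2}}\in V\subseteq\bar{B}(M;0)$, so $k_{2}v(x)\ge M$ and hence $v(x)\ge M/k_{2}>-1$; since $v(x)\in\mathbb{Z}$ this forces $v(x)\ge 0$, i.e.\ $x\in\mathcal{O}$. Thus $Y\subseteq\mathcal{O}$, as required.

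The only (routine) point of care is keeping track of the scaling of both valuation radii through the $k$-th root map and choosing the exponent to beat both. Note that this argument uses only the ``taking roots'' operation of the three tools foreshadowed in the abstract (subtracting, roots, Hensel's lemma); the latter two presumably enter the subsequent step of refining $Y$ to the whole of $\mathcal{O}$.
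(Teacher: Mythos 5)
Your proof is correct. The core device --- pulling $V$ back along the $k$-th power map so that the inner and outer valuation bounds both rescale by $1/k$ --- is exactly the device the paper uses in parts (1) and (3) of its own proof, but you organise the argument differently and more economically. The paper only records $\mathcal{P}\subseteq W$ after its first power-substitution (via $\mathcal{P}^{m}\subseteq S(m)\subseteq B(n;0)\subseteq V$) and then needs its lemma on spheres and balls ($B(1;0)\subseteq S(1)-S(1)$ and $\mathcal{M}=\mathcal{P}\sqcup B(1;0)$) to upgrade ``contains the uniformisers'' to ``contains $\mathcal{M}$'', which it does by forming the difference sets $W-(W\cup\{0\})$ and later $\psi(K)-\psi(K)$. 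You observe instead that every $x$ with $v(x)\geq 1$ already satisfies $v(x^{k})\geq k>N$, so the power preimage contains all of $\mathcal{M}$ in one stroke; this makes the subtraction steps, and hence the sphere/ball lemma, unnecessary, and collapses the three-stage construction to a single construction applied with two choices of exponent. Both arguments lean on the discreteness of the value group at the same point (upgrading $v(x)>-1$ to $v(x)\geq 0$ in part (3)), and both yield existential $C$-formulas by term substitution. Your version gives a shorter proof and a slightly simpler defining formula; the paper's difference construction is marginally more robust, in that it would still succeed if one only knew that the preimage contained the set of uniformisers rather than all of $\mathcal{M}$, but that extra generality is not needed here.
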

\begin{proof}
\begin{enumerate}
\item $V$ is a neighbourhood of $0$. Let $n\in\mathbb{Z}$ be such that $B(n;0)\subseteq V$. Without loss of generality, we suppose that $n\geq 0$. Choose any $m>n$; then $\mathcal{P}^{m}\subseteq S(m)\subseteq B(n;0)\subseteq V$. Let $\phi(x)$ be the formula expressing $x^{m}\in V$, and let $W:=\phi(K)$ be the set defined by $\phi$ in $K$. Note that $W$ is $\exists$-$C$-definable, and $\mathcal{P}\subseteq W$.

It remains to show that $W$ is bounded. Since $V$ is bounded, there exists $l\in\mathbb{Z}$ such that $V\subseteq B(l;0)$. Let $l':=\mathrm{min}\{l,-1\}$ and let $b\notin B(l';0)$. Since $vb\leq l'\leq-1<0$, we have that $vb^{m}=m vb\leq vb\leq l'\leq l$. Thus $b^{m}\notin V$ and $$\left(x^{m}\in V\implies x\in B(l';0)\right).$$ So $W\subseteq B(l';0)$.

\item Let $W':=W\cup\{0\}$ and set $X:=W-W'$. Clearly $X$ is bounded and $\exists$-$C$-definable. By \autoref{lem:spheres.and.balls}, we see that $B(1;0)\subseteq S(1)-S(1)=\mathcal{P}-\mathcal{P}\subseteq W-W\subseteq X$. Also $\mathcal{P}\subseteq W-0\subseteq X$. Thus $\mathcal{M}=\bar{B}(1;0)=\mathcal{P}\sqcup B(1;0)\subseteq X$.

\item $X$ is bounded but contains $\mathcal{M}$, so there exists $h\in\mathbb{N}$ such that $X\subseteq B(-h;0)$. Let $\psi(x)$ be the formula expressing $x^{h}\in X$, and set $Y:=\psi(K)-\psi(K)$. Observe that $Y$ is $\exists$-$C$-definable. It remains to show that $Y$ is bounded by $\mathcal{O}$ and that $\mathcal{M}\subseteq Y$.

If $va\leq-1$ then $va^{h}=hva\leq-h$. Thus if $va\leq-1$, then $a^{h}\notin B(-1,0)\supseteq X$ and $a\notin\psi(K)$. Therefore $\psi(K)\subseteq\mathcal{O}$. By \autoref{lem:spheres.and.balls}, $Y=\psi(K)-\psi(K)\subseteq\mathcal{O}-\mathcal{O}=\mathcal{O}$.

Since $\mathcal{P}^{h}\subseteq S(h)$ (where $\mathcal{P}^{h}$ is the set of $h$-th powers of elements of $\mathcal{P}$) and $S(h)\subseteq\mathcal{M}\subseteq X$; we have that $\mathcal{P}\subseteq\psi(K)$. Thus $\mathcal{P}-\mathcal{P}\subseteq\psi(K)-\psi(K)$. By \autoref{lem:spheres.and.balls}, $B(1;0)\subseteq\mathcal{P}-\mathcal{P}$; thus $B(1;0)\subseteq\psi(K)-\psi(K)$. Since $0^{h}=0\in\mathcal{M}\subseteq X$, $0\in\psi(K)$ and $\mathcal{P}-0\subseteq\psi(K)-\psi(K)$. By another application of \autoref{lem:spheres.and.balls}, this means that $\mathcal{M}=\mathcal{P}\sqcup B(1;0)\subseteq\psi(K)-\psi(K)=Y$, as required.
\end{enumerate}
\end{proof}

\subsection{The $\exists$-$\emptyset$-definition of $\mathbb{F}_{q}[[t]]$ in $\mathbb{F}_{q}((t))$}

Finally, we consider the special case where $F$ is the finite field $\mathbb{F}_{q}$ for $q$ a prime power. Thus we fix $K:=\mathbb{F}_{q}((t))$ and $\mathcal{O}:=\mathbb{F}_{q}[[t]]$.

\begin{lemma}\label{prp:bounded.nhbhd.1}
There exists an $\exists$-$\mathbb{F}_{q}$-definable bounded neighbourhood of $0$.
\end{lemma}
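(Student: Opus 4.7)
The plan is to apply Proposition~\ref{prp:def.of.nhbhds} with $C := \mathbb{F}_q$. To do so I need to verify three things: that the ambient field $K = \mathbb{F}_q((t))$ is t-henselian and not separably closed (which are needed implicitly, as these are the running hypotheses of Lemma~\ref{lem:def.of.nhbhds} on which the proposition rests), that $\mathbb{F}_q$ is not separably closed, and -- the only point with any real content -- that $\mathbb{F}_q$ is relatively algebraically closed in $K$.

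The first two items are essentially free. Since $(K,\mathcal{O})$ is henselian, it is automatically t-henselian (the $t$-adic topology is induced by the valuation and has the required lifting property). And $K$ is not separably closed: for instance $\sqrt{t}$ is not in $K$ because the value group $\mathbb{Z}$ has no $2$-torsion, which rules out separable closedness whenever $\mathrm{char}(K)\neq 2$, and in characteristic $2$ one can instead use any Artin--Schreier extension. The finite field $\mathbb{F}_q$ is perfect and admits separable extensions of every degree, so it too is not separably closed.

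The relative algebraic closure of $\mathbb{F}_q$ in $K$ is a standard valuation-theoretic fact, and the argument I would record is the following. Suppose $x \in K$ is algebraic over $\mathbb{F}_q$. Then $x$ is integral over $\mathbb{F}_q$, hence $v(x) \geq 0$; applying the same to $x^{-1}$ (for $x \neq 0$) gives $v(x) = 0$, so $x \in \mathcal{O}^\times$. Its minimal polynomial $f \in \mathbb{F}_q[X]$ is separable because $\mathbb{F}_q$ is perfect, and the residue $\bar{x}$ is a simple root of $f$ in the residue field $\mathbb{F}_q$. By Hensel's Lemma, $f$ has a unique root in $\mathcal{O}$ reducing to $\bar{x}$; but both $x$ and the element $\bar{x} \in \mathbb{F}_q \subseteq \mathcal{O}$ are such roots, so $x = \bar{x} \in \mathbb{F}_q$.

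Once the hypotheses are verified, Proposition~\ref{prp:def.of.nhbhds} immediately supplies an $\exists$-$\mathbb{F}_q$-definable bounded neighbourhood of $0$ in the t-henselian topology, which is the $t$-adic topology on $K$. There is no genuine obstacle here -- this lemma is simply the specialisation of the general proposition to $F = \mathbb{F}_q$, packaged together with the residue-field/Hensel argument for relative algebraic closure.
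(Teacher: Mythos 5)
Your proof is correct and takes exactly the paper's route: the paper's own proof is a two-line appeal to Proposition~\ref{prp:def.of.nhbhds}, simply asserting that $\mathbb{F}_q$ is relatively algebraically closed in $K$ and not separably closed, while you fill in the Hensel's-Lemma justification of the relative algebraic closure that the paper leaves implicit. One terminological slip worth fixing: $\sqrt{t}\notin K$ because the value group $\mathbb{Z}$ is not $2$-divisible (the value $1=v(t)$ is not twice anything), not because $\mathbb{Z}$ ``has no $2$-torsion''.
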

\begin{proof}
$\mathbb{F}_{q}\subseteq K$ is relatively algebraically closed in $K$ and is not separably closed. By \autoref{prp:def.of.nhbhds} there exists $V$ with the required properties.
\end{proof}

\begin{proposition}\label{prp:E-q-def.of.val.ring}
$\mathcal{O}$ is $\exists$-$\mathbb{F}_{q}$-definable in $K$.
\end{proposition}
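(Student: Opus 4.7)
The plan is to assemble what has already been set up. By \autoref{prp:bounded.nhbhd.1} I have an $\exists$-$\mathbb{F}_{q}$-definable bounded neighbourhood of $0$, and feeding it into \autoref{prp:fiddling.with.V}(3) with $C := \mathbb{F}_{q}$ yields an $\exists$-$\mathbb{F}_{q}$-definable set $Y$ sandwiched between $\mathcal{M}$ and $\mathcal{O}$. The remaining task is to enlarge $Y$ across the residue field so as to cover all of $\mathcal{O}$.

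The key observation is that $\mathbb{F}_{q}$ is simultaneously a transversal for $\mathcal{O}/\mathcal{M}$ and quantifier-free $\emptyset$-definable in $K$ as the zero set of $x^{q}-x$ (this is because $K$ has characteristic $p$, so $x^{q}-x$ has exactly $q$ roots in $K$, which must constitute $\mathbb{F}_{q}$). Using this, I would prove the set-theoretic identity $\mathcal{O} = \mathbb{F}_{q} + Y$. The inclusion $\mathbb{F}_{q} + Y \subseteq \mathcal{O}$ is immediate from $\mathbb{F}_{q},Y\subseteq\mathcal{O}$; for the reverse, given $a\in\mathcal{O}$, its residue lifts uniquely to some $c\in\mathbb{F}_{q}$, and then $a-c\in\mathcal{M}\subseteq Y$, so $a = c + (a-c) \in \mathbb{F}_{q}+Y$.

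Translating this into a formula, $\mathcal{O}$ is defined in $K$ by
\[
\exists y\, \exists z\, \bigl( y^{q} - y = 0 \,\land\, \phi_{Y}(z) \,\land\, x = y + z \bigr),
\]
where $\phi_{Y}$ is the $\exists$-$\mathbb{F}_{q}$-formula defining $Y$; the whole formula is manifestly $\exists$-$\mathbb{F}_{q}$.

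I do not anticipate a serious obstacle here: the real work is already hidden inside \autoref{prp:fiddling.with.V}, and the only new ingredient is the observation that $\mathbb{F}_{q}$ furnishes $\emptyset$-definable coset representatives of $\mathcal{M}$ in $\mathcal{O}$. The separate, and genuinely subtler, step of eliminating the $\mathbb{F}_{q}$-parameters to obtain the $\emptyset$-definition promised by \autoref{thm:E-0-def.of.val.ring} lies outside the scope of the present proposition.
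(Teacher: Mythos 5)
Your proposal is correct and follows essentially the same route as the paper: combine \autoref{prp:bounded.nhbhd.1} with \autoref{prp:fiddling.with.V} to get $Y$ with $\mathcal{M}\subseteq Y\subseteq\mathcal{O}$, then use the quantifier-free definition of $\mathbb{F}_{q}$ as the zero set of $x^{q}-x$ together with $\mathcal{O}=\mathbb{F}_{q}+\mathcal{M}$ to define $\mathcal{O}$ as $\mathbb{F}_{q}+Y$. The formula you write down is the paper's $\chi(x):=\exists y\,(y^{q}-y\,\dot{=}\,0\wedge x\in y+Y)$ with the membership in $Y$ unfolded.
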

\begin{proof}
We combine \autoref{prp:bounded.nhbhd.1} and \autoref{prp:fiddling.with.V} to obtain an $\exists$-$\mathbb{F}_{q}$-definable set $Y$ which contains $\mathcal{M}$ and is bounded by $\mathcal{O}$. Note that $\mathbb{F}_{q}$ is an algebraic set defined by the formula $x^{q}-x\dot{=}0$ in $K$. Let $\chi(x):=\exists y(y^{q}-y\dot{=}0\wedge x\in y+Y)$. This is obviously an $\exists$-$\mathbb{F}_{q}$-formula. Since $\mathcal{O}=\mathbb{F}_{q}+\mathcal{M}$ and $\mathcal{M}\subseteq Y\subseteq\mathcal{O}$, it is clear that $\chi(K)=\mathcal{O}$.
\end{proof}

We will improve \autoref{prp:E-q-def.of.val.ring} by removing the parameters. In the definition of the set $U_{f,a}$ we used $a$ and the coefficients of $f$ as parameters. All of these come from $\mathbb{F}_{q}$, but not necessarily from $\mathbb{F}_{p}$. Although elements of $\mathbb{F}_{q}$ are not closed terms, they are algebraic over $\mathbb{F}_{p}$. We use this algebraicity and a few simple tricks to find an existential formula with no parameters which defines $\mathcal{O}$.
\begin{fact}\label{fact:next.prime}
We state a simple consequence of Euclid's famous argument about the infinitude of the primes. Let $\{p_{i}|i\in I\}$ be a finite set of primes. There exists another prime $p'\leq\prod_{i\in I}p_{i}+1$ which is not in the set $\{p_{i}|i\in I\}$.

Now let $k\in\mathbb{N}$ and let $P$ be the set of primes that divide $k$. Of course $\prod_{p\in P}p\leq k$. By the previous remark, there exists another prime $p'\notin P$ such that $p'\leq\prod_{p\in P}p+1$. If $p'>k$ then $k=\prod_{p\in P}p$ and $p'=k+1$. Thus $p'\leq k+1$. Thus the least prime $p'$ not dividing a natural number $k$ is no greater than $k+1$. Of course $k+1$ is a very bad upper bound for $p'$ in general; although if $k=1,2$ then $p'=k+1$.
\end{fact}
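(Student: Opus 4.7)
The plan is to invoke Euclid's classical argument twice. For the first assertion I would set $N := \prod_{i \in I} p_{i} + 1$; since $N \geq 2$, it has at least one prime divisor $p'$, and for each $i \in I$ one has $N \equiv 1 \pmod{p_{i}}$, so $p_{i} \nmid N$. Hence $p' \notin \{p_{i} \mid i \in I\}$, and trivially $p' \leq N = \prod_{i \in I} p_{i} + 1$. The empty case $I = \emptyset$ is covered by the convention that the empty product equals $1$, giving $N = 2$ and $p' = 2$.

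For the assertion about the least prime not dividing $k$, I would specialise the first part to the finite set $P := \{ p \text{ prime} : p \mid k \}$. This yields a prime $p' \notin P$ with $p' \leq \prod_{p \in P} p + 1$. Since the distinct prime divisors of $k$ are pairwise coprime and each divides $k$, their product also divides $k$, so $\prod_{p \in P} p \leq k$ and hence $p' \leq k + 1$. Equality forces $k = \prod_{p \in P} p$, which explains the sharp edge cases $k = 1, 2$ noted in the statement.

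There is no genuine obstacle here; the whole argument reduces to two short applications of Euclid's observation, with only the trivial checks that $N \geq 2$ and that the empty index set causes no harm. None of the preceding machinery of the section — t-henselianity, the ball arithmetic of \autoref{lem:spheres.and.balls}, or the neighbourhood-tweaking of \autoref{prp:fiddling.with.V} — is required. The fact is inserted purely as an elementary number-theoretic lemma to control the size of the auxiliary prime that will feature in the forthcoming parameter-elimination improvement of \autoref{prp:E-q-def.of.val.ring}.
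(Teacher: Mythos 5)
Your argument is correct and is essentially identical to the one the paper itself gives inside the statement of the Fact: Euclid's construction $N=\prod_{i\in I}p_{i}+1$ produces a prime outside the given set, and specialising to the prime divisors of $k$ together with $\prod_{p\in P}p\leq k$ yields the bound $p'\leq k+1$. Your explicit handling of the empty index set is a harmless extra check not spelled out in the paper.
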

\begin{lemma}\label{prp:bounded.nhbhd.2}
There exists an $\exists$-$\emptyset$-definable bounded neighbourhood of $0$.
\end{lemma}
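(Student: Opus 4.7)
The plan is to recycle the construction of Proposition \ref{prp:def.of.nhbhds}, but replace the $\mathbb{F}_q$-parameters (the irreducible polynomial $f$ and the auxiliary point $a$) with data that is $\emptyset$-accessible: coefficients from the prime field $\mathbb{F}_p$ are closed terms (sums of $1$), and a quantifier over $\mathbb{F}_q$ is acceptable since $\mathbb{F}_q$ is defined by $x^q - x \dot= 0$.

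First I would apply Fact \ref{fact:next.prime} to $k$ (where $q = p^k$) to pick a prime $\ell \leq k+1$ with $\gcd(\ell, k) = 1$, and fix any irreducible polynomial $f \in \mathbb{F}_p[x]$ of degree $\ell$; at least one such $f$ exists because $\mathbb{F}_{p^\ell}/\mathbb{F}_p$ is an extension of degree $\ell$, and all its coefficients are closed $\emptyset$-terms. Next, I would check that $f$ stays irreducible over $K = \mathbb{F}_q((t))$. From $\gcd(\ell,k)=1$, any root $\alpha$ of $f$ in $\overline{\mathbb{F}_p}$ satisfies $[\mathbb{F}_q(\alpha):\mathbb{F}_q] = \ell$, so $f$ is irreducible over $\mathbb{F}_q$; and since $\mathbb{F}_q$ is relatively algebraically closed in $K$, any factor of $f$ in $K[x]$ already lies in $\mathbb{F}_q[x]$, forcing $f$ to be irreducible over $K$. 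Separability is automatic, as $\mathbb{F}_p$ is perfect, so $Df \neq 0$ and $Df$ has degree at most $\ell - 1$.

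Next, I would take
\[
V \;:=\; \bigcup_{a \in \mathbb{F}_q\setminus Z(Df)} U_{f,a},
\]
realised by the $\exists$-$\emptyset$-formula
\[
\chi(x) \;:=\; \exists a\,\exists y\,\bigl(\, a^q - a \dot= 0 \;\wedge\; Df(a) \neq 0 \;\wedge\; x \cdot f(y) \cdot f(a) \dot= f(a) - f(y)\,\bigr).
\]
The range of $a$ is non-empty, since $\deg(Df) \leq \ell - 1 \leq k < p^k = |\mathbb{F}_q|$ (using $2^k \geq k+1$); thus $V$ contains some $U_{f,a}$, which by \autoref{lem:def.of.nhbhds} is already a neighbourhood of $0$.

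Finally I would verify that $V$ is bounded by $\mathcal{O}$. A short case split, using that $f$ has $\mathbb{F}_p$-coefficients and no zero in the residue field $\mathbb{F}_q$, shows that $v(f(y)) \leq 0$ for every $y \in K$ (either $y \in \mathcal{O}$ and $f(y) \in \mathcal{O}^\times$, or $v(y) < 0$ and $v(f(y)) = \ell v(y) < 0$); hence $f(y)^{-1} \in \mathcal{O}$. Similarly $f(a)^{-1} \in \mathbb{F}_q^\times \subseteq \mathcal{O}$ for $a \in \mathbb{F}_q\setminus Z(Df)$. Consequently $V \subseteq \mathcal{O} - \mathcal{O} = \mathcal{O}$. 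The main obstacle is the irreducibility step — carrying $f$ from $\mathbb{F}_p$ all the way up to $K$ — which is precisely what Fact \ref{fact:next.prime} and the relative algebraic closure of $\mathbb{F}_q$ in $K$ are for; after that, everything reduces to a direct translation of Proposition \ref{prp:def.of.nhbhds}.
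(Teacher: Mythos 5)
Your proposal is correct and follows essentially the same route as the paper: choose a prime $\ell\leq k+1$ coprime to $k$ via Fact~\ref{fact:next.prime}, take $f\in\mathbb{F}_p[x]$ irreducible of degree $\ell$ (which remains irreducible over $\mathbb{F}_q$ and hence over $K$), note that $Df$ cannot vanish on all of $\mathbb{F}_q$ since $\deg Df\leq\ell-1<q$, and replace the single parameter $a$ by an existential quantifier over the zero set of $x^q-x$, yielding the union $\bigcup_{a}U_{f,a}$. Your extra verifications (irreducibility over $K$ via relative algebraic closure, and the direct computation showing $V\subseteq\mathcal{O}$ rather than appealing to finiteness of the union of bounded sets) are sound elaborations of steps the paper leaves implicit.
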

\begin{proof}
We seek a polynomial $f\in\mathbb{F}_{p}[x]$ which is irreducible in $\mathbb{F}_{q}[x]$ and is such that not all elements of $\mathbb{F}_{q}$ are roots of $Df$, i.e. $x^{q}-x\nmid Df$.

Write $q=p^{k}$ and let $l$ be the least prime not dividing $k$. By \autoref{fact:next.prime} $l\leq k+1$; consequently $l\leq p^{k}=q$. Let $f\in \mathbb{F}_{p}[x]$ be an irreducible polynomial of degree $l$. Since $l\nmid k$, $f$ is still irreducible in $\mathbb{F}_{q}[x]$. Furthermore, $Df$ is of degree $\leq l-1<q$. Thus it cannot be the case that every element of $\mathbb{F}_{q}$ is a zero of $Df$. For any $a\in\mathbb{F}_{q}$ which is not a zero of $Df$, $U_{f,a}=f(K)^{-1}-f(a)^{-1}$ is an $\exists$-$\mathbb{F}_{q}$-definable bounded neighbourhood of $0$. We note that the only parameter in this definition not from $\mathbb{F}_{p}$ is $a$.

The union of finitely many bounded neighbourhoods of $0$ is also a bounded neighbourhood of $0$. Thus $$\zeta(y):=\exists y\;(y^{q}-y\dot{=}0\wedge\neg Df(y)\dot{=}0\wedge x\in U_{f,y})$$ is an $\exists$-$\mathbb{F}_{p}$-formula which defines the union $$V:=\bigcup\{U_{f,a}\;|\;a\in\mathbb{F}_{q},Df(a)\neq0\}.$$

Finally note that each element of $\mathbb{F}_{p}$ is the image of a closed term; thus each remaining parameter can be replaced by a closed term and we are left with an $\exists$-$\emptyset$-definition of $V$.
\end{proof}
\begin{remark}
Here is an alternative method to find an irreducible separable polynomial $f\in\mathbb{F}_{p}[x]$ and an element $a\in\mathbb{F}_{p}$ which is not a root of $Df$.\\

Let $l$ be a prime such that $p\nmid l\nmid k$. Let $g\in\mathbb{F}_{p}[x]$ be any irreducible polynomial of degree $l$. Since $l\nmid k$, $g$ is still irreducible over $\mathbb{F}_{q}$. Let $\alpha$ be a root of $g$ in a field extension. Either the coefficient of $x^{l-1}$ in $g$ is zero; or else we consider $h:=g(x-1)$, which is the minimal polynomial of $\alpha+1$. The coefficient of $x^{l-1}$ in $h$ is then $l\neq0$. Thus we may assume that the $x^{l-1}$ term in $g$ is non-zero. The polynomial $f:=x^{l}g(1/x)$ is the minimal polynomial of $1/\alpha$ and has non-zero linear term. Therefore $Df(0)\neq0$. Thus $U_{f,0}$ is an $\exists$-$\mathbb{F}_{p}$-definable bounded neighbourhood of $0$. As before, elements of $\mathbb{F}_{p}$ are closed terms, so we may remove all parameters from the definition.
\end{remark}
Finally, we prove \autoref{thm:E-0-def.of.val.ring}.
\begin{theorem.1}
$\mathcal{O}$ is $\exists$-$\emptyset$-definable in $K$.
\end{theorem.1}
\begin{proof}
From \autoref{prp:bounded.nhbhd.2} we obtain an $\exists$-$\emptyset$-definable bounded neighbourhood of $0$. Using again \autoref{prp:fiddling.with.V}, we obtain an $\exists$-$\emptyset$-definable set $Y$ which contains $\mathcal{M}$ and is bounded by $\mathcal{O}$. We define $\chi$ as before:$$\chi(x):=\exists y\;(y^{q}-y\dot{=}0\wedge x\in y+Y).$$This is an $\exists$-formula with no parameters and it defines $\mathcal{O}$.
\end{proof}
Nevertheless the formula still depends on $\mathbb{F}_{q}$ in several ways: our choices of $m$ and $h$ in \autoref{prp:fiddling.with.V} and our choice of $f$ in \autoref{thm:E-0-def.of.val.ring} depend on $\mathbb{F}_{q}$. The number $q$ also appears directly in several of the formulas. All these factors tell us that $\chi$ is highly non-uniform in $q$. In fact, in recent as-yet-unpublished joint work of Cluckers, Derakhshan, Leenknegt, and Macintyre (\cite{Cluckers-Derakhshan-Leenknegt-Macintyre12}) it is shown that no definition exists which is uniform in $p$ or in $k$ (where $q=p^{k}$).
\begin{remark}
With a little more effort we can be more explicit about the formula $\chi$. Suppose for the moment that $K=\mathbb{F}_{p}((t))$. Let $\wp:=x^{p}-x$ and let $f:=\wp-1$. Observe that $\wp-1$ is separable and irreducible in $K[x]$ and $Df(1)=D(\wp)(1)=-1\neq0$. Working back through the formulas and rearranging, we find that$$\chi(x):=\exists ab(x_{i}y_{i})_{i=1}^{4}\;\left(\begin{array}{l}\wp(x-a+b)\dot{=}0\;\wedge\;a^{h}\dot{=}x_{1}-x_{2}\;\wedge\\b^{h}\dot{=}x_{3}-x_{4}\;\wedge\bigwedge_{i=1}^{4}f(y_{i})(x_{i}^{m}-1)-1\dot{=}0\end{array}\right).$$
\end{remark}

\section{Extensions of the result}

\subsection{The field $\bigcup_{n\in\mathbb{N}}\mathbb{F}_{q}((t^{1/n}))$ of Puiseux series}

Let $K^{\mathrm{Px}}:=\bigcup_{n\in\mathbb{N}}\mathbb{F}_{q}((t^{1/n}))$ denote the field of Puiseux series over $\mathbb{F}_{q}$, where $(t^{1/n})_{n\in\mathbb{N}}$ is a compatible system of $n$-th roots of $t$ (for $n\in\mathbb{N}$). Note that $K^{\mathrm{Px}}$ can be formally defined as a direct limit. Let $\mathcal{O}^{\mathrm{Px}}:=\bigcup_{n\in\mathbb{N}}\mathbb{F}_{q}[[t^{1/n}]]$ denote the valuation ring of the $t$-adic valuation. Note that the value group is $\mathbb{Q}$.

The following theorem is the first example of an $\exists$-$\emptyset$-definition of a non-trivial valuation ring with divisible value group.

\begin{theorem}\label{thm:Puiseux}
$\mathcal{O}^{\mathrm{Px}}$ is $\exists$-$\emptyset$-definable in $K^{\mathrm{Px}}$.
\end{theorem}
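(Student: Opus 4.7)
The plan is to re-use verbatim the $\exists$-$\emptyset$-formula $\chi$ already constructed in the proof of \autoref{thm:E-0-def.of.val.ring}, and to verify that it already defines $\mathcal{O}^{\mathrm{Px}}$ inside $K^{\mathrm{Px}}$. The guiding principle is the same one exploited (in the negative) in \autoref{prp:other.power.series.fields}: a parameter-free existential formula is transported across ring isomorphisms and commutes with directed unions of substructures.

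The first step is a reindexing observation. For each $n\in\mathbb{N}$, the assignment $t\mapsto t^{1/n}$ extends to a ring isomorphism $\mathbb{F}_{q}((t))\xrightarrow{\sim}\mathbb{F}_{q}((t^{1/n}))$ which sends $\mathbb{F}_{q}[[t]]$ to $\mathbb{F}_{q}[[t^{1/n}]]$. Because $\chi$ contains no parameters, \autoref{thm:E-0-def.of.val.ring} together with this isomorphism shows that $\chi$ also defines $\mathbb{F}_{q}[[t^{1/n}]]$ inside $\mathbb{F}_{q}((t^{1/n}))$, uniformly in $n$.

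The second step is a standard directed-union argument. For the inclusion $\mathcal{O}^{\mathrm{Px}}\subseteq\chi(K^{\mathrm{Px}})$, any $x\in\mathcal{O}^{\mathrm{Px}}$ sits in some $\mathbb{F}_{q}[[t^{1/n}]]$, so $\chi(x)$ is witnessed already inside $\mathbb{F}_{q}((t^{1/n}))$, and the witnesses remain in the extension $K^{\mathrm{Px}}$. Conversely, if $\chi(x)$ holds in $K^{\mathrm{Px}}$ via finitely many witnesses $\bar{y}$, then by directedness of $K^{\mathrm{Px}}=\bigcup_{n}\mathbb{F}_{q}((t^{1/n}))$ there is an $N$ with $x,\bar{y}\in\mathbb{F}_{q}((t^{1/N}))$; since the quantifier-free body of $\chi$ descends to this subfield, $\chi(x)$ holds in $\mathbb{F}_{q}((t^{1/N}))$, and hence $x\in\mathbb{F}_{q}[[t^{1/N}]]\subseteq\mathcal{O}^{\mathrm{Px}}$.

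The real work has already been done in proving \autoref{thm:E-0-def.of.val.ring}; the only point requiring attention is that $\chi$ is genuinely parameter-free, since the reindexing step relies on it. Had the definition used an element like $t$ as a parameter, the isomorphism $t\mapsto t^{1/n}$ would have changed it and the formulas interpreted in the various $\mathbb{F}_{q}((t^{1/n}))$ would no longer agree. Because \autoref{thm:E-0-def.of.val.ring} delivers an honest $\emptyset$-definition, this obstruction does not appear, and we conclude $\chi(K^{\mathrm{Px}})=\bigcup_{n}\mathbb{F}_{q}[[t^{1/n}]]=\mathcal{O}^{\mathrm{Px}}$.
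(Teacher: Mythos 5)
Your proposal is correct and follows essentially the same route as the paper's proof: transport $\chi$ along the isomorphisms $\mathbb{F}_{q}((t))\cong\mathbb{F}_{q}((t^{1/n}))$ (valid precisely because $\chi$ is parameter-free) and then use the fact that an existential formula defines, in a directed union, the union of the sets it defines in the pieces. You merely spell out the two directions of that last step (upward persistence of existentials and absoluteness of the quantifier-free matrix), which the paper states in one line.
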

\begin{proof}
By \autoref{thm:E-0-def.of.val.ring}, we may let $\chi$ be an $\exists$-formula (with no parameters) which defines $\mathcal{O}$ in $K$. In each field $\mathbb{F}_{q}((t^{1/n}))$ the formula $\chi$ defines the valuation ring $\mathbb{F}_{q}[[t^{1/n}]]$ since each of these fields is isomorphic to $\mathbb{F}_{q}((t))$. In the union, $\chi$ defines the union of the valuation rings (in any union of structures an existential formula defines the unions of sets that it defines in each of the structures). Thus $\chi$ defines $\mathcal{O}^{\mathrm{Px}}=\bigcup_{n\in\mathbb{N}}\mathbb{F}_{q}[[t^{1/n}]]$, as required.
\end{proof}

\subsection{The perfect hull $\mathbb{F}_{q}((t))^{\mathrm{perf}}$}

We still denote $K:=\mathbb{F}_{q}((t))$. Let $K^{\mathrm{perf}}:=\bigcup_{n\in\mathbb{N}}\mathbb{F}_{q}((t^{p^{-n}}))$ be the \em perfect hull \rm of $K$; this is also formally defined as a direct limit. Now we use \autoref{thm:E-0-def.of.val.ring} to existentially define the valuation ring $\mathcal{O}^{\mathrm{perf}}:=\bigcup_{n<\omega}\mathcal{O}^{p^{-n}}$ in $K^{\mathrm{perf}}$.

\begin{theorem}\label{thm:perfect.hull}
$\mathcal{O}^{\mathrm{perf}}$ is $\exists$-$\emptyset$-definable in $K^{\mathrm{perf}}$.
\end{theorem}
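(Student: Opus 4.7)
The plan is to mimic the proof of \autoref{thm:Puiseux} essentially verbatim, with $p^n$-th roots of $t$ in place of $n$-th roots. First, apply \autoref{thm:E-0-def.of.val.ring} to obtain an existential $\emptyset$-formula $\chi(x)$ which defines $\mathcal{O}$ in $K$. Because $\chi$ has no parameters, it is preserved under any $\mathcal{L}_{\mathrm{ring}}$-isomorphism of fields. For each fixed $n$, the substitution $t \mapsto t^{p^{-n}}$ extends to an $\mathcal{L}_{\mathrm{ring}}$-isomorphism $\mathbb{F}_q((t)) \to \mathbb{F}_q((t^{p^{-n}}))$ which sends $\mathbb{F}_q[[t]]$ onto $\mathbb{F}_q[[t^{p^{-n}}]]$. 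A short Frobenius computation, namely $\bigl(\sum_j a_j t^{j/p^n}\bigr)^{p^n} = \sum_j a_j^{p^n} t^j$, shows that $\mathbb{F}_q[[t^{p^{-n}}]]$ as a subring of $K^{\mathrm{perf}}$ coincides with $\mathcal{O}^{p^{-n}} = \{y \in K^{\mathrm{perf}} : y^{p^n} \in \mathcal{O}\}$. Hence $\chi$ defines $\mathcal{O}^{p^{-n}}$ inside $\mathbb{F}_q((t^{p^{-n}}))$ for every $n$.

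Next, I would invoke the standard model-theoretic fact that existential formulas commute with directed unions of substructures: if $M = \bigcup_i M_i$ is a directed union of $\mathcal{L}$-substructures and $\phi$ is an existential $\emptyset$-formula, then $\phi(M) = \bigcup_i \phi(M_i)$, because any finite tuple of witnesses for $\phi(a)$ lies inside some single $M_i$ by directedness. Applied to $K^{\mathrm{perf}} = \bigcup_n \mathbb{F}_q((t^{p^{-n}}))$ and the formula $\chi$, this gives
\[ \chi(K^{\mathrm{perf}}) \;=\; \bigcup_n \chi(\mathbb{F}_q((t^{p^{-n}}))) \;=\; \bigcup_n \mathcal{O}^{p^{-n}} \;=\; \mathcal{O}^{\mathrm{perf}}, \]
which is the desired $\exists$-$\emptyset$-definition.

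I do not foresee any real obstacle: the argument is completely parallel to the Puiseux case and uses no fact about $K^{\mathrm{perf}}$ beyond its presentation as a directed union of copies of $\mathbb{F}_q((t))$. The essential point is that $\chi$ is parameter-free, so its interpretation depends only on the abstract $\mathcal{L}_{\mathrm{ring}}$-isomorphism type of the field $\mathbb{F}_q((s))$ rather than on any distinguished uniformizer $s$; once that observation is in place, the transfer from each stage of the direct limit to the whole perfect hull is automatic from the preservation of $\exists$-formulas under directed unions.
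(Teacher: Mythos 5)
Your proposal is correct and is essentially the paper's own argument: the paper proves this by noting the proof is almost identical to the Puiseux-series case, i.e.\ each $\mathbb{F}_q((t^{p^{-n}}))$ is isomorphic to $\mathbb{F}_q((t))$, the parameter-free formula $\chi$ therefore defines the valuation ring in each, and existential formulas commute with directed unions of substructures. Your added Frobenius computation identifying $\mathbb{F}_q[[t^{p^{-n}}]]$ with $\mathcal{O}^{p^{-n}}$ is a harmless explicit check of a point the paper leaves implicit.
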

\begin{proof}
The proof is almost identical to the proof of \autoref{thm:Puiseux}.
\end{proof}

\subsection{Consequences for $\exists$-definability in $\mathcal{L}_{\mathrm{val}}$}

We return to the field $K:=\mathbb{F}_{q}((t))$. The most important consequence of \autoref{thm:E-0-def.of.val.ring} is that questions of existential definability in $\mathcal{L}_{\mathrm{val}}$ reduce to questions of existential definability in $\mathcal{L}_{\mathrm{ring}}$. Let $C\subseteq\mathbb{F}_{q}((t))$ be any subfield of parameters and let $\mathcal{L}_{\mathrm{val}}:=\mathcal{L}_{\mathrm{ring}}\cup\{\mathcal{O}\}$ be the language of valued fields.

\begin{proposition}\label{prp:reduction.of.the.problem}
Let $\alpha\in\mathcal{L}_{\mathrm{val}}$ be an existential formula with parameters from $C$. Then there exists $\beta\in\mathcal{L}_{\mathrm{ring}}$ with parameters in $C$ such that $\alpha$ and $\beta$ are equivalent modulo the theory of $\mathbb{F}_{q}((t))$.
\end{proposition}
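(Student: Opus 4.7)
The plan is to perform a direct textual substitution using \autoref{thm:E-0-def.of.val.ring}. That theorem supplies an existential $\mathcal{L}_{\mathrm{ring}}$-formula $\chi(x)$ with no parameters such that $\chi$ defines $\mathcal{O}$ in $K:=\mathbb{F}_{q}((t))$; in particular, the sentence $\forall x\,(\mathcal{O}(x)\leftrightarrow\chi(x))$ belongs to $\mathrm{Th}_{\mathcal{L}_{\mathrm{val}}}(K)$. Writing $\alpha=\exists\bar{y}\,\phi(\bar{x},\bar{y},\bar{c})$ in prenex form with $\phi$ quantifier-free and parameters $\bar{c}$ from $C$, I will define $\beta$ by replacing every atomic subformula $\mathcal{O}(s)$ appearing in $\phi$ by $\chi(s)$. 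Since $\chi$ is parameter-free, $\beta$ is an $\mathcal{L}_{\mathrm{ring}}$-formula whose parameters coincide with those of $\alpha$, hence lie in $C$; and the equivalence $\mathcal{O}(z)\leftrightarrow\chi(z)$, being a consequence of $\mathrm{Th}_{\mathcal{L}_{\mathrm{val}}}(K)$, immediately yields $\alpha\leftrightarrow\beta$ modulo the theory.

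The subtle point---and where the real work lies if one wants the stronger ``existential-to-existential'' reduction advertised immediately before the proposition---is the treatment of negative occurrences $\neg\mathcal{O}(s)$ in $\phi$. Naive substitution turns these into $\neg\chi(s)$, which is universal, so the resulting $\beta$ is in general only $\exists\forall$. To keep $\beta$ existential one needs a parameter-free existential $\mathcal{L}_{\mathrm{ring}}$-definition of $K\setminus\mathcal{O}$; equivalently, via $x\in K\setminus\mathcal{O}\iff x\neq 0\wedge x^{-1}\in\mathcal{M}$, a parameter-free existential definition of the maximal ideal $\mathcal{M}$. I would aim to obtain this by a variant of the argument for \autoref{prp:fiddling.with.V}: produce directly an $\exists$-$\emptyset$-definable neighbourhood of $0$ contained in $\mathcal{M}$ (rather than merely in $\mathcal{O}$) by sharpening the choice of exponents $m,h$ and intersecting with the now-available definition of $\mathcal{O}$ from \autoref{thm:E-0-def.of.val.ring}, and then iterate the tweaking procedure so that the final set equals $\mathcal{M}$ exactly. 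Making this tightening succeed---so that the resulting $Y$ satisfies $Y=\mathcal{M}$ rather than only $\mathcal{M}\subseteq Y\subseteq\mathcal{O}$---is the step I expect to be the main obstacle. Once this is done, $K\setminus\mathcal{O}$ is existentially defined by $\exists y\,(xy=1\wedge y\in\mathcal{M})$, and the substitution procedure of the first paragraph produces an existential $\beta$.
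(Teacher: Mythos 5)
Your first paragraph already proves the proposition as literally stated (nothing in the statement forces $\beta$ to be existential), and you correctly locate the real content --- the content actually used in \autoref{cor:H10} --- in the negative occurrences $\neg\mathcal{O}(s)$: to keep $\beta$ existential one needs an $\exists$-$\emptyset$-definition of $K\setminus\mathcal{O}$, equivalently of $\mathcal{M}$. This is exactly the pivot of the paper's proof. But the route you sketch for obtaining it --- tightening \autoref{prp:fiddling.with.V} so that the final set $Y$ equals $\mathcal{M}$ exactly --- is not carried out, and there is a concrete reason to doubt it can be made to work. Since $0\in\psi(K)$, the set $Y=\psi(K)-\psi(K)$ contains $\psi(K)$, so $Y=\mathcal{M}$ would force $\psi(K)\subseteq\mathcal{M}$, i.e.\ you would have to exclude every unit from $\psi(K)$. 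All the control the tweaking procedure provides comes from one-sided valuation bounds of the form $X\subseteq B(l;0)$ obtained by raising to powers, and such bounds cannot separate value $0$ from value $\geq 1$; intersecting with the already-defined $\mathcal{O}$ does not help either, since $\mathcal{O}$ also contains the units. Distinguishing $\mathcal{O}^{\times}$ from $\mathcal{M}$ is precisely the problem to be solved, so the proposed tightening begs the question.

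The paper closes this gap with an elementary observation exploiting the finiteness of the residue field: for $a\in\mathcal{O}$ one has $a\in\mathcal{M}$ if and only if $a+b\in\mathcal{O}^{\times}$ for every $b\in\mathbb{F}_{q}^{\times}$, and ``$z$ is a unit'' is existential once $\mathcal{O}$ is (namely $\exists w\,(zw\dot{=}1\wedge w\in\mathcal{O})$). The finitely many conditions form a finite conjunction, hence stay existential, and the parameters $b$ are eliminated by existentially quantifying a tuple $\mathbf{y}$ realising the quantifier-free type of an enumeration of $\mathbb{F}_{q}$. This yields an $\exists$-$\emptyset$-definition of $\mathcal{M}$, hence of $K\setminus\mathcal{O}=(\mathcal{M}\setminus\{0\})^{-1}$, hence a $\forall$-$\emptyset$-definition of $\mathcal{O}$, after which your substitution procedure does produce an existential $\beta$. (If you want a definition closer in spirit to your Hensel-based instincts: $\mathcal{M}=\{x-x^{q}\;:\;x\in\mathcal{O}\}$, since $x\mapsto x-x^{q}$ annihilates residues and, by Hensel's Lemma applied to $x-x^{q}-m$ for $m\in\mathcal{M}$, is onto $\mathcal{M}$; this too turns an existential definition of $\mathcal{O}$ into one of $\mathcal{M}$.)
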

\begin{proof}
Let $\mathbf{b}=(b_{i})_{i<q}$ be some indexing of the field $\mathbb{F}_{q}$ such that $b_{0}=0$. Let $\phi$ be a quantifier-free formula in free variables $\mathbf{y}=(y_{i})_{i<q}$ expressing the quantifier-free type of $\mathbf{b}$. We define $$\psi\;:=\;\exists\mathbf{y}\;\Big(x\in\mathcal{O}\wedge\phi(\mathbf{y})\wedge\bigwedge_{0<i<q}y_{i}+x\in\mathcal{O}^{-1}\Big).$$We claim that $\psi$ existentially defines $\mathcal{M}$. Let $a\in\mathcal{O}$. Then $a\in\mathcal{M}$ if and only if, for each $b\in\mathbb{F}_{q}^{\times}$, $a+b\in\mathcal{O}\setminus\mathcal{M}=\mathcal{O}^{\times}$; that is if and only if $K\models\psi(a)$. Thus $\psi$ is an $\exists$-$\emptyset$-definition for $\mathcal{M}$. Consequently, $K\setminus\mathcal{O}=(\mathcal{M}\setminus\{0\})^{-1}$ is $\exists$-$\emptyset$-definable; and so $\mathcal{O}$ is $\forall$-$\emptyset$-definable.

Since $\mathcal{O}$ is both $\forall$-$\emptyset$-definable and $\exists$-$\emptyset$-definable, we may convert any $\exists$-$C$-formula $\alpha$ of $\mathcal{L}_{\mathrm{val}}$ into an $\exists$-$C$-formula $\beta$ of $\mathcal{L}_{\mathrm{ring}}$.
\end{proof}

\begin{corollary}\label{cor:H10}
Hilbert's 10th problem has a solution over $\mathbb{F}_{q}((t))$ if and only if it does so over $\mathbb{F}_{q}[[t]]$, in any language which expands the language of rings.
\end{corollary}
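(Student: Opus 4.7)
The plan is to give, for each direction of the biconditional, a direct reduction between existential sentences over $K := \mathbb{F}_q((t))$ and existential sentences over $\mathcal{O} := \mathbb{F}_q[[t]]$. The key observation, which carries over uniformly to any expansion $\mathcal{L} \supseteq \mathcal{L}_{\mathrm{ring}}$, is that the $\exists$-$\emptyset$-formula $\chi$ defining $\mathcal{O}$ in $K$ provided by \autoref{thm:E-0-def.of.val.ring} already lies in $\mathcal{L}_{\mathrm{ring}}$, and is therefore \emph{a fortiori} a formula of $\mathcal{L}$.

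For the direction ``H10 over $K$ decidable implies H10 over $\mathcal{O}$ decidable'', I would relativize existential quantifiers to $\chi$. Given an existential $\mathcal{L}$-sentence $\alpha = \exists \bar{x}\, \phi(\bar{x})$ whose truth in $\mathcal{O}$ is in question, form
\begin{equation*}
\alpha^{\chi} \;:=\; \exists \bar{x}\, \Bigl( \bigwedge_i \chi(x_i) \wedge \phi(\bar{x}) \Bigr),
\end{equation*}
which is again an existential $\mathcal{L}$-sentence. Since $\mathcal{O}$ is a substructure of $K$, quantifier-free formulas are preserved in both directions; hence $K \models \alpha^{\chi}$ iff $\mathcal{O} \models \alpha$, and any algorithm deciding existential $\mathcal{L}$-sentences in $K$ thereby decides them in $\mathcal{O}$.

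For the converse, I would exploit the fact that $K$ is the fraction field of $\mathcal{O}$: every element of $K$ has the form $y/z$ with $y, z \in \mathcal{O}$ and $z \neq 0$. Given an existential $\mathcal{L}$-sentence $\alpha = \exists \bar{x}\, \phi(\bar{x})$ to be tested in $K$, I would introduce new variables $y_i, z_i$, impose $z_i \neq 0$, and rewrite each atomic $\mathcal{L}_{\mathrm{ring}}$-subformula of $\phi$ by clearing denominators after the substitution $x_i = y_i/z_i$. This yields an existential $\mathcal{L}$-sentence $\alpha^*$ over $\mathcal{O}$ such that $K \models \alpha$ iff $\mathcal{O} \models \alpha^*$.

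The main delicate point is the book-keeping for genuinely expanded languages: one must verify that the additional symbols of $\mathcal{L}$ behave well under relativization (so that $\mathcal{O}$ remains a substructure of $K$ for the full signature) and under the quotient substitution. For $\mathcal{L}_{\mathrm{ring}}$ itself, for $\mathcal{L}_{\mathrm{ring}} \cup \{t\}$, and for $\mathcal{L}_{\mathrm{val}}$ (where \autoref{prp:reduction.of.the.problem} has already absorbed the valuation predicate into $\mathcal{L}_{\mathrm{ring}}$), these checks are routine and pose no obstacle.
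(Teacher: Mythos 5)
Your proposal is correct and follows essentially the same two reductions as the paper: in one direction you relativize the existential quantifiers to the $\exists$-$\emptyset$-formula defining $\mathcal{O}$ in $K$ (exactly as the paper does), and in the other you clear denominators to push an existential sentence from $K$ down to $\mathcal{O}$. The only difference is a minor implementation detail in the converse direction: you represent elements of $K$ as general fractions $y/z$ with $y,z\in\mathcal{O}$, $z\neq 0$, whereas the paper exploits the valuation-ring dichotomy (each $x\in K$ satisfies $x\in\mathcal{O}$ or $x^{-1}\in\mathcal{O}$) and takes a disjunction over the subsets of variables to be inverted --- both are valid, and your caveat about extra symbols in a genuine expansion of $\mathcal{L}_{\mathrm{ring}}$ is one the paper's own proof also glosses over by reducing to a single polynomial equation over $\mathbb{F}_{p}$.
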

\begin{proof}
Let $\phi$ be a quantifier-free formula with $\mathbf{x}$ the tuple of free-variables. Suppose that Hilbert's 10th problem (H10) has a solution over $\mathbb{F}_{q}((t))$. In order to decide the existential sentence $\exists\mathbf{x}\;\phi(\mathbf{x})$ in $\mathbb{F}_{q}[[t]]$ we apply our algorithm for $\mathbb{F}_{q}((t))$ to the sentence $$\exists\mathbf{x}\;\bigg(\phi(\mathbf{x})\wedge\bigwedge_{x\in\mathbf{x}}\mathcal{O}(x)\bigg),$$where $\mathcal{O}$ denotes the existential formula defining $\mathbb{F}_{q}[[t]]$ in $\mathbb{F}_{q}((t))$.

Conversely, suppose that H10 has a solution over $\mathbb{F}_{q}[[t]]$. By standard equivalences in the theory of fields we may assume that $\phi=f\dot{=}0$ for some polynomial $f\in\mathbb{F}_{p}[\mathbf{x}]$.

We need to find a quantifier-free formula which is realised in $\mathbb{F}_{q}[[t]]$ if and only if $f$ has a zero in $\mathbb{F}_{p}((t))$. For a variable $x\in\mathbf{x}$ we let $d_{x}$ denote the degree of $f$ in $x$; and for any subtuple $\mathbf{x}'\subseteq\mathbf{x}$ we let $\mathbf{x}'':=(\mathbf{x}\setminus\mathbf{x}')\cup\{x^{-1}|x\in\mathbf{x}'\}$ be a new tuple formed from $\mathbf{x}$ by inverting the elements of $\mathbf{x}'$. Then we set $f_{\mathbf{x}'}:=f(\mathbf{x}'')\prod_{x\in\mathbf{x}'}x^{d_{x}}$. Importantly, $f_{\mathbf{x}'}$ is a polynomial. Finally we let $$\phi':=\bigvee_{\mathbf{x}'\subseteq\mathbf{x}}\bigg(f_{\mathbf{x}'}\dot{=}0\wedge\bigwedge_{x\in\mathbf{x}'}\neg x\dot{=}0\bigg).$$Then $\mathbb{F}_{q}((t))\models\exists\mathbf{x}\;f(\mathbf{x})\dot{=}0$ if and only if $\mathbb{F}_{q}[[t]]\models\exists\mathbf{x}\;\phi'(\mathbf{x})$. Therefore, in order to decide $\exists\mathbf{x}\;\phi(\mathbf{x})$ in $\mathbb{F}_{q}((t))$ we apply our algorithm for $\mathbb{F}_{q}[[t]]$ to the existential sentence $\exists\mathbf{x}\;\phi'(\mathbf{x})$.
\end{proof}
A simple consequence of the `folkloric' definition of $\mathbb{F}_{q}[[t]]$ from \autoref{prp:folklore} is that \autoref{cor:H10} holds for any language expanding $\mathcal{L}_{\mathrm{ring}}\cup\{t\}$.

Note that, by the theorem of Denef-Schoutens in \cite{Denef-Schoutens03}, Hilbert's 10th problem has a positive solution in $\mathbb{F}_{p}[[t]]$ in the language $\mathcal{L}_{\mathrm{ring}}\cup\{t\}$ on the assumption of Resolution of Singularities in positive characteristic.

If Hilbert's 10th problem could be proved - outright - to have a positive solution in $\mathbb{F}_{p}[[t]]$ simply in the language of rings, then \autoref{cor:H10} would `lift' that result to $\mathbb{F}_{p}((t))$.

\def\bibfont{\footnotesize}

\begin{thebibliography}{1}

\bibitem{Ax65}
James Ax.
\newblock On the undecidability of power series fields.
\newblock {\em Proc. Amer. Math. Soc.}, 16:846, 1965.

\bibitem{Cluckers-Derakhshan-Leenknegt-Macintyre12}
Raf Cluckers, Jamshid Derakshan, Eva Leenknegt, and Angus Macintyre.
\newblock Uniformly defining valuation rings in henselian valued fields with
  finite and pseudo-finite residue field.
\newblock {\em arXiv:1306.1802v1}, 2013.

\bibitem{Denef-Schoutens03}
Jan Denef and Hans Schoutens.
\newblock On the decidability of the existential theory of
  $\mathbb{F}_{p}[[t]]$.
\newblock {\em Fields Inst. Commun.}, 33:43--60, 2003.

\bibitem{Koenigsmann04}
Jochen Koenigsmann.
\newblock Elementary characterization of fields by their absolute galois group.
\newblock {\em Siberian Adv. Math.}, 14:16--42, 2004.

\bibitem{Macintyre76}
Angus Macintyre.
\newblock On definable subsets of p-adic fields.
\newblock {\em J. Symbolic Logic}, 41:605--610, 1976.

\bibitem{Prestel91}
Alexander Prestel.
\newblock Algebraic number fields elementarily determined by their absolute
  {G}alois group.
\newblock {\em Israel J. Math.}, 73(2):199--205, 1991.

\bibitem{Prestel-Ziegler78}
Alexander Prestel and Martin Ziegler.
\newblock Model-theoretic methods in the theory of topological fields.
\newblock {\em J. Reine Angew. Math.}, 299 (300):318--341, 1978.

\end{thebibliography}

\null
\noindent
Will Anscombe\\
Lincoln College, Turl Street, Oxford OX1 3DR, UK\\
\tt anscombe@maths.ox.ac.uk\rm
\linebreak
\linebreak
Jochen Koenigsmann\\
Mathematical Institute, 24-29 St Giles’, Oxford OX1 3LB, UK\\
\tt koenigsmann@maths.ox.ac.uk\rm
\end{document}